\newtheorem{theorem}{Theorem}[section]
\newtheorem{lemma}{Lemma}[section]
\newtheorem{question}{Question}[section]
\newtheorem{claim}{Claim}[section]
\newenvironment{proof}
      {\medskip\noindent{\bf Proof:}\hspace{1mm}}
      {\hfill$\Box$\medskip}
\def\qed{\ifvmode\mbox{ }\else\unskip\fi\hskip 1em plus 10fill$\Box$}
\def\Ddots{\mathinner{\mkern1mu\raise\p@
\vbox{\kern7\p@\hbox{.}}\mkern2mu
\raise4\p@\hbox{.}\mkern2mu\raise7\p@\hbox{.}\mkern1mu}}
\newcommand{\dbrace}[1]{
\begin{tikzpicture}
\draw (0,0) node[rotate=45]{$\underbrace{\rule{#1}{0pt}}$};
\end{tikzpicture}}
\title{\vspace{-0.7cm}Tower-type bounds for unavoidable patterns in words}
\author{David Conlon\thanks{Mathematical Institute, Oxford OX2 6GG,
United Kingdom. Email: {\tt david.conlon@maths.ox.ac.uk}. Research
supported by a Royal Society University Research Fellowship and by ERC Starting Grant 676632.}\and
Jacob Fox\thanks{Department of Mathematics, Stanford University, Stanford, CA 94305, USA. Email: {\tt jacobfox@stanford.edu}. Research supported by a Packard Fellowship, by NSF Career Award DMS-1352121 and by an Alfred P. Sloan Fellowship.}
\and
Benny Sudakov\thanks{Department of Mathematics, ETH, 8092 Zurich, Switzerland.
Email: {\tt benjamin.sudakov@math.ethz.ch}. Research supported in part by SNSF grant 200021-175573.}}
\date{}
\begin{document}
\maketitle

\begin{abstract}
A word $w$ is said to contain the pattern $P$ if there is a way to substitute a nonempty word for each letter in $P$ so that the resulting word is a subword of $w$. Bean, Ehrenfeucht and McNulty and, independently, Zimin characterised the patterns $P$ which are unavoidable, in the sense that any sufficiently long word over a fixed alphabet contains $P$. Zimin's characterisation says that a pattern is unavoidable if and only if it is contained in a Zimin word, where the Zimin words are defined by $Z_1 = x_1$ and $Z_n=Z_{n-1} x_n Z_{n-1}$.  We study the quantitative aspects of this theorem, obtaining essentially tight tower-type bounds for the function $f(n,q)$, the least integer such that any word of length $f(n, q)$ over an alphabet of size $q$ contains $Z_n$. When $n = 3$, the first non-trivial case, we determine $f(n,q)$ up to a constant factor, showing that $f(3,q) = \Theta(2^q q!)$.
\end{abstract}

\section{Introduction}

The term Ramsey theory refers to a broad range of deep results from various mathematical areas, like combinatorics, logic, geometry, ergodic theory, number theory and analysis, all connected by the fact that large systems contain unavoidable patterns. Examples of such results include Ramsey's theorem in graph theory, Szemer\'edi's theorem in number theory, Dvoretzky's theorem in asymptotic functional analysis and much more.

In this paper, we study the appearance of such unavoidable patterns in words, where words and patterns are here defined to be strings of characters from distinct fixed alphabets. We say that a word $w$ {\it contains} the pattern $P$ if there is a way to substitute nonempty words, which need not be disjoint or even distinct, for the letters in $P$ so that the resulting word is a subword of $w$, where a subword of $w$ is defined to be a string of consecutive letters from $w$.  Conversely, we say that $w$ {\it avoids} $P$ if $w$ does not contain $P$.

For example, it is a simple exercise to show that every four-letter word over a two-letter alphabet contains the pattern $xx$, while Thue~\cite{T06, T12} famously constructed an infinite word over a three-letter alphabet avoiding $xx$. This example alone has a surprisingly rich history~\cite{AS99, BP07}, being related, among other things, to work of Morse~\cite{M21} on symbolic dynamics.

For a positive integer $q$, we say that the pattern $P$ is {\it $q$-unavoidable} if every sufficiently long word over a $q$-letter alphabet contains a copy of $P$. In the example above, where $P = xx$, $P$ is $2$-unavoidable, but $3$-avoidable. We say that the pattern $P$ is {\it unavoidable} if it is $q$-unavoidable for all $q$. The unavoidable patterns were characterised by Bean, Ehrenfeucht and McNulty~\cite{BEM79} and, independently, by Zimin~\cite{Z82}. Zimin's characterisation, which is particularly appropriate for our purposes, says that a pattern is unavoidable if and only if it is contained in a Zimin word.

The {\it Zimin words} are defined recursively: $Z_1 = a$, $Z_2 = aba$, $Z_3 = abacaba$ and, in general, $Z_n = Z_{n-1}xZ_{n-1}$, where $x$ is a new letter. As well as playing a central role in the study of unavoidable patterns in words, these words are important in the study of Burnside-type problems, showing up in Ol'shanskii's proof of the Novikov--Adian theorem and, in a slightly different guise, in Zelmanov's work on the restricted Burnside problem (see~\cite{S14} for a thorough discussion).

It is natural and interesting to consider the quantitative aspects of Zimin's theorem. Following Cooper and Rorabaugh~\cite{CR14}, we let $f(n,q)$ denote the smallest integer such that every word of length $f(n,q)$ over an alphabet of size $q$ contains a copy of $Z_n$. It is a simple exercise to verify that $f(1, q) = 1$ and $f(2, q) = 2q + 1$. For general $n$, Zimin's work gives an Ackermann-type upper bound for $f(n, q)$. However, a combination of recent results due to Cooper and Rorabaugh~\cite{CR14} and Rytter and Shur~\cite{RS15} gives the considerably better bound that, for $n \geq 3$ and $q \geq 2$,
\[f(n,q) \leq q^{q^{\iddots^{q + o(q)}}}
 \raisebox{-10pt}[0pt][0pt]{\hspace*{-24pt}\dbrace{11pt}}
 \raisebox{0pt}[0pt][0pt]{\hspace*{-11pt}\scriptsize{$n$-$1$}}
,\]
where the $o(q)$ term in the topmost exponent does not depend on $n$ (in fact, it can be taken to be zero when $q$ is sufficiently large). 

Our first result is a lower bound matching the upper bound when $q$ is sufficiently large in terms of $n$.

\begin{theorem} \label{thm:main1}
For any fixed $n \geq 3$, 
\[f(n,q) \geq q^{q^{\iddots^{q - o(q)}}}
 \raisebox{-10pt}[0pt][0pt]{\hspace*{-24pt}\dbrace{11pt}}
 \raisebox{0pt}[0pt][0pt]{\hspace*{-11pt}\scriptsize{$n$-$1$}}
.\]
\end{theorem}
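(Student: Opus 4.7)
The proof is by induction on $n$. The base case $n = 3$ is the lower bound $f(3, q) \geq \Omega(2^q q!) \geq q^{q - o(q)}$, which is established separately in the paper. For the inductive step from $n - 1$ to $n$ (with $n \geq 4$) I aim to prove a recursive inequality of the form
\[ f(n, q) \;\geq\; q^{\,f(n-1, q) - C(q)}, \]
where $C(q)$ is a small loss (say $O(\log q)$). Iterating this bound $n - 3$ times starting from the base case produces a tower of height $n - 1$ with base $q$, and because the tower grows so fast the $C(q)$-losses at intermediate levels all get absorbed into the $o(q)$ correction appearing only at the topmost exponent.

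For the inductive step, let $V = v_1 v_2 \cdots v_L$ be a $Z_{n-1}$-avoiding word over $[q]$ of length $L = f(n-1, q) - 1$ supplied by the induction hypothesis. I would construct a $Z_n$-avoiding word $W$ over $[q]$ of length at least $q^{L - C(q)}$ as a ``labelled blow-up'' of $V$: for each tuple $\phi = (\phi_1, \ldots, \phi_L) \in [q]^L$ form a block $B_\phi$ in which each letter $v_i$ of $V$ is flanked by $\phi_i$'s, and then concatenate the blocks $B_\phi$ in a prescribed order (for instance a Gray-type enumeration, so that consecutive blocks differ in only one coordinate). The total length is $\Theta(L \cdot q^L)$, which is of the desired order. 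The intuition is that each block encodes a long copy of $V$ together with auxiliary data, and the ordering of the blocks ensures that the sequence of auxiliary data has ``enough entropy'' while not itself producing a $Z_n$-pattern.

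The main task is to verify that $W$ avoids $Z_n$. Assume for contradiction that $W$ contains $Z_n$ realised by substitutions $x_i \to X_i$. Writing $Z_n = Z_{n-1} \cdot x_n \cdot Z_{n-1}$, the two $Z_{n-1}$-pieces on either side of $X_n$ use identical substitutions for $x_1, \ldots, x_{n-1}$. I would project the pair of matched $Z_{n-1}$-copies back down to $V$: for each position in the left copy track which block it lies in and which $v_j$-slot of that block it occupies, and argue that the consistency between the left and right copies forces the projected sequence of $v_j$-slots to realise $Z_{n-1}$ as a subword of $V$. This would contradict the hypothesis on $V$ and complete the induction.

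The principal obstacle will be making this projection step rigorous. The substitutions $X_i$ can be arbitrarily long, straddle many block boundaries, and need not respect the block structure at all, so one must show that each $X_i$ projects to a \emph{non-empty} subword of $V$ (to produce valid substitutions at the smaller level) and that the equality $X_i = X_i$ between the left and right copies descends to equality of projections. This will likely require a careful pigeonhole over block-boundary data, together with a judicious choice of the block-enumeration order so that the between-block material is easy to ignore. Managing this without incurring a multiplicative loss in the alphabet size $q$ at each level of the recursion --- so that the $o(q)$ appears only once, at the very top of the tower --- is the delicate part of the argument and is what makes the lower bound match the Rytter--Shur and Cooper--Rorabaugh upper bound.
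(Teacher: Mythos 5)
There is a genuine gap here, and in fact the construction you describe does not avoid $Z_n$. Take the step from $n-1=3$ to $n=4$: since $V$ avoids $Z_3$ but has length $L\gg q$, it has repeated letters, so there are indices $i<j$ with $v_i=v_j$ and $j\ge i+2$. Pick any $\phi$ with $\phi_i=\phi_j$ and any $\phi'$ that agrees with $\phi$ on every coordinate in $[i,j]$ but differs somewhere else. Inside $B_\phi$, the subword $X$ running from the left flank of slot $i$ to the right flank of slot $j$ begins with $\phi_i v_i \phi_i$ and ends with the identical string $\phi_j v_j \phi_j$, so $X=U M U$ with $U$ a copy of $Z_2$ and $M$ nonempty, i.e.\ $X$ is a copy of $Z_3$; the very same string $X$ occurs in $B_{\phi'}$, and the two occurrences are separated by nonempty material, yielding a copy $XYX$ of $Z_4$ in $W$. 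Since all $q^L$ blocks appear, such pairs $\phi,\phi'$ exist in abundance no matter how you order the blocks (and the argument is insensitive to the exact flanking convention or to whether the $\phi_i$ come from a disjoint auxiliary alphabet). The root cause is the one you flag yourself but do not resolve: the ``projection'' to block indices and $v_j$-slots is not recoverable from the word, because flanking letters and $V$-letters live in the same alphabet, so equality of the two copies of $Z_{n-1}$ as strings carries no information about block boundaries.

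The paper's recursive proof repairs exactly these two defects, at the cost of a strengthened induction hypothesis and a slightly larger alphabet. It tracks the set $S(n,q)$ of $Z_n$-avoiding words possessing a distinguished letter $d$ such that every subword avoiding $d$ is $Z_{n-1}$-free, and builds the next word by concatenating all $M(n,q)$ \emph{distinct} members of $S(n,q)$ separated by a brand-new distinguished letter (with a parity trick on the old one), giving $F(n+1,q+2)\ge M(n,q)$ and $M(n+1,q+2)\ge M(n,q)!$. The new letter makes block boundaries syntactically visible; the pairwise distinctness of the blocks forces each copy of $Z_n$ inside a putative $Z_{n+1}$ to contain exactly one separator; and the distinguished-letter property then kills the inner $Z_{n-1}$. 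Note that the length gained per step is the \emph{number} of words at the previous level, not a blow-up of one word, so the base case must be the counting statement of Theorem~\ref{thm:3LLL2} ($q^{q^{q-o(q)}}$ words with the distinguished-letter property), not merely the existence of one long $Z_3$-avoiding word; and the alphabet grows by $2$ per level, which is harmless for fixed $n$. Your insistence on keeping the alphabet fixed at $q$ is essentially the difficulty of Theorem~\ref{thm:main2}, which the paper handles with long runs of $1$s simulating a distinguished letter. The paper's other proof of Theorem~\ref{thm:main1} avoids recursion entirely, applying the general local lemma to a hierarchy of events ``two identical intervals of length $y_i$ within distance $y_{i+1}$.''
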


In particular, for $n = 3$, this says that $f(3,q) \geq q^{q - o(q)}$, a result we will prove by an appeal to the Lov\'asz local lemma. A key observation here is that it is not enough to apply the local lemma to the uniform random model where every word of a given length occurs with the same probability (though an approach of this form is discussed in~\cite{CR14}). Instead, we make use of a non-uniform random model which separates all instances of any given letter. 

For higher $n$, there are two different ways to proceed, one based on generalising the local lemma argument discussed above and another based on an explicit iterative construction which allows us to step up from the $Z_n$-case to the $Z_{n+1}$-case for all $n \geq 3$. This is in some ways analogous to the situation for hypergraph Ramsey numbers, where the Ramsey numbers of complete $3$-uniform hypergraphs determine the Ramsey numbers of complete $k$-uniform hypergraph for all $k \geq 4$. The difference here is that we are able to determine $f(3,q)$ very accurately, while the Ramsey number of the complete $3$-uniform hypergraph remains as elusive as ever (see~\cite{CFS15} for a thorough discussion).

This stepping-up method also allows us to address the weakness in Theorem~\ref{thm:main1}, that $n$ is taken to be fixed. Indeed, after suitable modification, the method proves sufficiently malleable that we can prove a tower-type lower bound even over a binary alphabet. This is the content of the next theorem, which is clearly tight up to an additive constant in the tower height.

\begin{theorem} \label{thm:main2}
\[f(n,2) \geq 2^{2^{\iddots^{2}}}
 \raisebox{-10pt}[0pt][0pt]{\hspace*{-24pt}\dbrace{11pt}}
 \raisebox{0pt}[0pt][0pt]{\hspace*{-11pt}\scriptsize{$n$-$4$}}
.\]
\end{theorem}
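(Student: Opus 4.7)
The plan is to prove Theorem~\ref{thm:main2} by induction on $n$ via a stepping-up construction, modified so that the alphabet stays at $\{0,1\}$ throughout. Let $g(n) := f(n,2)-1$ denote the maximum length of a binary word avoiding $Z_n$. The goal is to establish the recursion
\[
g(n+1) \;\geq\; 2^{g(n)}
\]
for all $n \geq 4$. Iterating from the trivial base case $g(4) \geq 1$ then immediately yields $g(n) \geq 2^{2^{\iddots^{2}}}$ with $n-4$ twos, which is the claim of the theorem. (In fact $g(4) \geq 14$ is free: any substitution-image of $Z_4$ has length at least $|Z_4| = 15$, so every binary word of length less than $15$ vacuously avoids $Z_4$.)

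For the inductive step, starting from a binary word $w \in \{0,1\}^L$ that avoids $Z_n$, I would build a binary word $w'$ of length $N \geq 2^L$ avoiding $Z_{n+1}$ by indexing the positions of $w'$ with binary codes $\sigma \in \{0,1\}^L$ listed in some canonical order (e.g.\ lexicographic, or along a Gray code) and setting $w'(\sigma) := \phi(w,\sigma) \in \{0,1\}$ for a carefully designed function $\phi$. In the spirit of the classical Erd\H{o}s--Hajnal stepping-up, some feature of $\sigma$ (for instance, the position of the lowest bit at which $\sigma$ differs from its predecessor in the ordering) would select a coordinate of $w$ whose value is then read off into $w'(\sigma)$. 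Finding the right $\phi$---the ``suitable modification'' alluded to in the introduction---is part of the main obstacle, since plausible candidates such as $w'(\sigma) := w_{\max\{i : \sigma_i = 1\}}$ produce too many repetitions to avoid small Zimin words, so $\phi$ must genuinely scramble the encoding.

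The crux of the proof is the verification lemma: every occurrence of $Z_{n+1}$ in $w'$ must force an occurrence of $Z_n$ in $w$, contradicting the choice of $w$. Such an occurrence of $Z_{n+1}$ is a subword $A\,B\,A$ of $w'$, where $A$ is itself a substitution-image of $Z_n$ and $B$ is nonempty; the two identical $A$-intervals sit at disjoint position ranges in $w'$, so their codes are related by a rigid shift. With $\phi$ designed appropriately, this shift should project down to a substitution realising $Z_n$ inside $w$. The delicate tasks in closing this argument are (i) ensuring that the projected substitution has a \emph{nonempty} image on every variable $x_1,\dots,x_n$ (nondegeneracy), and (ii) controlling the boundary effects where the matched $A$-intervals cross transitions in the encoding, so that the rigid shift survives these transitions and continues to yield the desired substitution. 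These are the steps on which the technical work will concentrate; once the verification lemma is in hand, the induction closes and Theorem~\ref{thm:main2} follows.
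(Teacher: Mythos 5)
There is a genuine gap: your proposal is a plan rather than a proof, and the plan's central object is never constructed. Everything hinges on the function $\phi$ that turns a single $Z_n$-avoiding word $w$ of length $L$ into a $Z_{n+1}$-avoiding word of length $2^L$, and you explicitly leave $\phi$ undefined, noting that the obvious candidates fail. The ``verification lemma'' --- that an occurrence $XYX$ of $Z_{n+1}$ in $w'$ projects to an occurrence of $Z_n$ in $w$ --- is likewise only stated as a goal, with the two hard points (nondegeneracy of the projected substitution, boundary effects) flagged but not resolved. The Erd\H{o}s--Hajnal analogy is not load-bearing here: that argument steps up colourings of tuples, whereas here one must convert ``two identical long subwords at shifted positions'' into a subword containment one level down, and nothing in the proposal shows this can be done. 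So the inductive inequality $g(n+1)\geq 2^{g(n)}$ is unsupported.

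It is also worth noting that the paper's actual mechanism is structurally different and suggests why a single-word recursion is the wrong unit of induction. The paper maintains a large \emph{family} $B(n)$ of equal-length binary words satisfying five properties (beginning and ending in $0$, containing no $1_{2n+1}$, a ``distinguished separator'' property for subwords avoiding long runs of ones, and $Z_n$-freeness of the word padded by $1_{2n}$ and, after a parity modification, by $1_{2n+1}$). The step-up concatenates all the words of $B(n)$ in an arbitrary order, separated by blocks $1_{2n+2}$ which play the role of a distinguished letter, and modifies alternate blocks so that consecutive blocks can be told apart; this gives $|B(n+1)|\geq |B(n)|!$, and the tower-type length bound then follows from the pigeonhole observation that equal-length binary words number at most $2^{\ell}$, so $\ell \geq \log_2 |B(n)|$. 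The exponential gain thus comes from counting orderings of a family, not from a positional encoding of a single word, and the extra structural properties carried along in the induction are exactly what your sketch is missing: avoiding $Z_n$ alone is not a strong enough hypothesis to step up within a two-letter alphabet.
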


We also look more closely at the $n = 3$ case. This has been studied in some depth before, with Rytter and Shur~\cite{RS15} proving that $f(3,q) = O(2^q (q+1)!)$. We improve their result by a factor of roughly $q$ and show that this is tight up to a multiplicative constant.

\begin{theorem} \label{thm:f3q}
$f(3,q) = \Theta(2^q q!)$.
\end{theorem}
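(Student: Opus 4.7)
The plan is to prove the matching bounds $f(3,q) = O(2^q q!)$ and $f(3,q) = \Omega(2^q q!)$ separately.

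For the upper bound, I would proceed by induction on $q$, aiming at the recursion $f(3,q) \leq 2q \cdot f(3,q-1) + O(1)$, which together with the trivial base case yields $f(3,q) = O(2^q q!)$. Given a word $w$ of length roughly $2q \cdot f(3,q-1)$ over a $q$-letter alphabet $\Sigma$, let $a \in \Sigma$ be the most frequent letter, with $m \geq 2f(3,q-1)$ occurrences at positions $p_1 < \cdots < p_m$. Write $G_i = w[p_i+1, \ldots, p_{i+1}-1]$ for the internal gaps; these live over a $(q-1)$-letter alphabet, and by induction each has length $< f(3,q-1)$, so no $Z_3$ sits entirely within one gap. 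The argument then produces a $Z_3$ using the letter $a$: to avoid $Z_3$ with $A = a$ we would need $G_i \neq G_{i+2}$ for every $i$, and when $A$ extends into the gaps, the analysis iterates with a further common letter shared between distant gaps. Obtaining the factor-$q$ improvement over the Rytter--Shur bound $O(2^q(q+1)!)$ amounts to tightening this outermost pigeonhole so that the inductive coefficient is $2q$ rather than $2(q+1)$.

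For the lower bound, I would construct $w_q$ recursively as
\[
w_q \;=\; T_1\,q\,T_2\,q\,\cdots\,q\,T_{2q},
\]
where each $T_i$ is a relabeled copy of $w_{q-1}$ obtained by applying some permutation $\sigma_i$ of $\{1,\ldots,q-1\}$, and the new letter $q$ serves as the separator. The length recursion $|w_q| = 2q\,|w_{q-1}| + (2q-1)$ solves to $|w_q| = \Theta(2^q q!)$, so it suffices to choose the $\sigma_i$ so that $w_q$ avoids $Z_3$. To verify avoidance, I would split possible $Z_3$-instances $ABACABA$ according to which of $A$, $B$, $C$ contains the new letter $q$: instances sitting entirely inside a single $T_i$ are eliminated by induction, while instances with $q \in A$, $q \in B$, or $q \in C$ reduce to specific combinatorial equalities between distinct blocks $T_i$, $T_j$ that the choice of $\sigma_i$ must preclude.

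The main obstacle is the case $q \in C$ only. Here $A$ and $B$ lie in $\{1,\ldots,q-1\}^{*}$, and the $Z_3$-instance consists of the substring $ABA$ appearing inside some block $T_i$, then content involving $q$, then $ABA$ again inside a later block $T_j$ with $j > i$. Merely requiring the $T_i$ to be pairwise distinct is not enough, because two different relabelings can still share a common $ABA$-substring. Overcoming this requires choosing the $\sigma_i$ so that no pair of blocks $T_i$, $T_j$ at distance at least two shares a $Z_2$-image, leveraging the recursive structure of $w_{q-1}$. Dovetailing this argument with the $q \in A$ and $q \in B$ cases, and simultaneously squeezing the precise coefficient $2q$ in the upper-bound recursion, is the delicate technical content of the proof.
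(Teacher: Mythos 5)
Both halves of your argument take a genuinely different route from the paper, and both have a real gap at the decisive step; the common missing ingredient is the notion of a \emph{minimal} copy of $Z_2$ (one containing no shorter copy). Such copies have the form $ab_1^{j_1}\cdots b_r^{j_r}a$ with the $b_i$ distinct and $j_i\in\{1,2\}$, hence have length at most $2q+1$ and number only about $2^q q!$; every window of length $2q+1$ contains one, and in a $Z_3$-free word a given one can recur only within a bounded window. The paper's upper bound is a single global count of these objects against windows (after collapsing doubled letters, with careful bookkeeping of lengths), not an induction on $q$. Your induction via the most frequent letter $a$ stalls exactly where you admit it does: the pigeonhole on the gaps $G_i$ between occurrences of $a$ only shows that no gap-word repeats non-adjacently, but there are up to $(q-1)^{f(3,q-1)}$ possible gap-words and only about $2f(3,q-1)$ gaps, so no contradiction follows; and the cases where the image of $x$ properly contains $a$, or the image of $y$ spans several gaps, are not analysed. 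The recursion $f(3,q)\le 2q\,f(3,q-1)+O(1)$ is therefore asserted rather than proved.

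For the lower bound, the paper does not assemble $w_q$ from relabelled copies of $w_{q-1}$; it builds the word in one step as an Eulerian circuit in a de Bruijn-type digraph whose $2^{q-3}q!$ vertices are the words with $q-1$ distinct letters and optional doubling of internal letters, so that each minimal $Z_2$ is realised essentially once and a copy of $Z_3$ would force a repeated edge. Your block construction has two problems. First, the constraint is stronger than you state: consecutive blocks are separated by the letter $q$, so two occurrences of the same word $xyx$ in \emph{any} two distinct blocks, adjacent or not, are disjoint and separated and already yield a copy of $Z_3$; thus all $\binom{2q}{2}$ pairs of blocks must have disjoint sets of $Z_2$-subwords. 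Second, and more seriously, achieving this by relabelling alone is not verified and is at best extremely tight: by the window count above, each block, being a $Z_3$-free word of length $\Theta(2^{q-1}(q-1)!)$ over $q-1$ letters, must contain $\Omega(2^{q-1}(q-1)!/q^2)$ distinct minimal copies of $Z_2$ out of at most $2^{q-1}(q-1)!$ in total, so you are asking for $2q$ permuted copies of a dense set to be pairwise disjoint — a packing condition your proposal neither verifies nor makes plausible. As it stands, neither direction of $f(3,q)=\Theta(2^q q!)$ is established.
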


The paper is laid out as follows. For completeness, we will describe the simple proof of the upper bound on $f(n, q)$ in the next section. In Section~\ref{sec:local}, we will show how the local lemma can be used to prove Theorem~\ref{thm:main1}. We do this in two stages, first proving a lower bound for $f(3,q)$ which is sufficient for iteration and then addressing the general case. In Section~\ref{sec:stepup}, we discuss the stepping-up technique, first showing how to complete the second proof of Theorem~\ref{thm:main1} via this method and then how to modify the approach to give Theorem~\ref{thm:main2}. In Section~\ref{sec:z3}, we prove Theorem~\ref{thm:f3q}, determining $f(3,q)$ up to a constant factor. We conclude by discussing some further directions and open problems. Throughout the paper, we will use $\log$ to denote the logarithm base $2$. For the sake of clarity of presentation, we will also systematically omit floor and ceiling signs.

\section{The upper bound} \label{sec:upper}

The proof of the upper bound has two components. The first is the following simple lemma, due to Cooper and Rorabaugh~\cite{CR14}.

\begin{lemma} \label{lem:recur}
$f(n+1, q) \leq (f(n,q) + 1)(q^{f(n,q)} + 1) - 1$.
\end{lemma}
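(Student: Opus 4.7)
The plan is to set $L = f(n,q)$ and show that any word $w$ of length $N = (L+1)(q^L+1) - 1$ over a $q$-letter alphabet must contain $Z_{n+1}$. The key observation is that $Z_{n+1} = Z_n \, x \, Z_n$, so to embed $Z_{n+1}$ in $w$ it suffices to find two occurrences of $Z_n$ in $w$ arising from the \emph{same} substitution of letters, with a nonempty gap between them.

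First I would partition $w$ into $q^L+1$ consecutive blocks $B_1, B_2, \ldots, B_{q^L+1}$, each of length $L$, separated by $q^L$ single ``separator'' characters. The total length matches $N$ exactly: $L(q^L+1) + q^L = (L+1)(q^L+1) - 1$. Since $|B_i| = L = f(n,q)$, each block $B_i$ contains a copy of $Z_n$; that is, there is a substitution $\sigma_i$ of the letters of $Z_n$ by nonempty words such that $\sigma_i(Z_n)$ occurs as a subword of $B_i$.

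Next I would apply pigeonhole at the level of the blocks themselves, not at the level of the substitutions. There are only $q^L$ distinct words of length $L$ over the alphabet, but we have $q^L+1$ blocks, so two blocks $B_i$ and $B_j$ with $i<j$ are identical as words. Because $B_i$ and $B_j$ are identical, the \emph{same} substitution $\sigma$ realises $Z_n$ at the \emph{same} relative position in both blocks, so $\sigma(Z_n)$ appears as a subword of $w$ inside $B_i$ and again, with identical substituted words, inside $B_j$.

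Finally, the material between these two occurrences of $\sigma(Z_n)$ consists of the suffix of $B_i$ after $\sigma(Z_n)$, at least one separator character, possibly further blocks and separators, and the prefix of $B_j$ before $\sigma(Z_n)$. Since $i<j$, this material contains at least one separator and is therefore nonempty, so it can serve as the substitution for the new letter $x$ in $Z_{n+1} = Z_n \, x \, Z_n$. Thus $w$ contains $Z_{n+1}$, proving $f(n+1,q) \le N$. There is no real obstacle here; the one thing to execute carefully is the bookkeeping of block and separator lengths so that the bound comes out exactly as stated.
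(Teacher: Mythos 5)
Your proof is correct and follows essentially the same route as the paper's: the same decomposition into $q^{f(n,q)}+1$ blocks of length $f(n,q)$ separated by single letters, followed by a pigeonhole argument yielding two identical occurrences of $Z_n$ with a nonempty gap. The only difference is that you apply the pigeonhole to the blocks themselves rather than to the chosen copies of $Z_n$; this is a minor but slightly cleaner variant, since the number of words of length exactly $f(n,q)$ is exactly $q^{f(n,q)}$, making the count immediate.
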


\begin{proof}
Consider a word of length $(f(n,q) + 1)(q^{f(n,q)} + 1) - 1$ of the form
\[\underbrace{\underbrace{ab \dots c}_{f(n,q)} x \underbrace{hi \dots k}_{f(n,q)} y \dots z \underbrace{rs \dots t}_{f(n,q)}}_{q^{f(n,q)} + 1}.\]
That is, we have $q^{f(n,q)} + 1$ words of length $f(n,q)$, each separated by an additional letter. By the definition of $f(n,q)$, each such word contains a copy of $Z_n$. Since there are $q^{f(n,q)} + 1$ such copies, two of them must be equal. As these two copies are separated by at least one letter, this yields a copy of $Z_{n+1}$.
\end{proof}

A naive application of Lemma~\ref{lem:recur} starting from $f(2, q) = 2q+1$ already yields a bound of the form
\[f(n,q) \leq q^{q^{\iddots^{2q + o(q)}}}
 \raisebox{-10pt}[0pt][0pt]{\hspace*{-24pt}\dbrace{11pt}}
 \raisebox{0pt}[0pt][0pt]{\hspace*{-11pt}\scriptsize{$n$-$1$}}
.\]
To improve the topmost exponent, we use the following refinement of Lemma~\ref{lem:recur}, due to Rytter and Shur~\cite{RS15}. The method works for all $n$, but for our purposes it will suffice to consider the case  $n = 3$.

\begin{lemma} \label{lem:base}
$f(3,q) \leq 2^{q+1} (q+1)!$.
\end{lemma}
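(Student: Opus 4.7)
\medskip\noindent\textbf{Proof plan.} The plan is to establish the multiplicative recurrence $f(3,q)\leq 2(q+1)\,f(3,q-1)$, which iterates from the base case $f(3,1)\leq 8$ to give $f(3,q)\leq 2^{q+1}(q+1)!$ as desired. For the base case, over a unary alphabet $a^k$ contains $Z_3=a^7$ if and only if $k\geq 7$, so $f(3,1)=7\leq 8=2^2\cdot 2!$.

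For the inductive step, let $w$ be a word of length $L=2(q+1)f(3,q-1)$ over a $q$-letter alphabet $\Sigma$. First, for each letter $d\in\Sigma$, look at the (possibly empty) blocks of non-$d$ letters between consecutive occurrences of $d$ (including before the first occurrence and after the last). If any such block has length at least $f(3,q-1)$, then it is a sufficiently long word over the $(q-1)$-letter alphabet $\Sigma\setminus\{d\}$ and contains $Z_3$ by the inductive hypothesis, and we are done. I may therefore assume that for every letter $d\in\Sigma$ consecutive occurrences of $d$ in $w$ are at distance less than $f(3,q-1)$; in particular each letter appears in every length-$f(3,q-1)$ window of $w$ and so appears at least $2(q+1)$ times overall.

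I then fix $c:=w[0]$ and write $0=p_1<p_2<\cdots<p_m$ for its occurrences, so that
\[
w = c\,B_1\,c\,B_2\,c\cdots c\,B_m,\qquad m\geq 2(q+1),
\]
where each $B_i$ is a word over $\Sigma\setminus\{c\}$ of length less than $f(3,q-1)$. The goal is to produce a copy of $Z_3$ of the shape $c\,B\,c\,C\,c\,B\,c$ by locating indices $i<j$ with $B_i=B_j$ a common nonempty word and with sufficient separation that $C:=w[p_{i+1}+1..p_j-1]$ is also nonempty (for instance $j\geq i+3$, or $j=i+2$ with $B_{i+1}\neq\varepsilon$). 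A simplifying preliminary is that a run of seven consecutive $c$'s is itself $Z_3=c^7$, so one may additionally assume that no maximal $c$-run in $w$ has length exceeding six.

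The main obstacle is this final extraction of matching blocks $B_i=B_j$: a direct pigeonhole on block contents is far too weak, since the space of candidate blocks is doubly exponential in $q$ while $m$ is only linear in $q$. The route forward is to exploit the strong structural constraint derived in the previous paragraph—that every length-$f(3,q-1)$ window contains every letter of $\Sigma$—to drastically restrict the family of candidate blocks, or, alternatively, to reapply the alphabet-reduction argument on a carefully chosen subsequence of the $c$-blocks so that the constant $2(q+1)$ in the recurrence is exactly what is needed. This constrained pigeonhole is the crux of the proof; once it yields two matching and adequately separated blocks, the copy of $Z_3$ assembles immediately.
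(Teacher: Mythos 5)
There is a genuine gap, and you have named it yourself: the "constrained pigeonhole" that is supposed to produce two matching blocks $B_i=B_j$ is the entire content of the lemma, and neither of the two routes you sketch for it goes through. With the recurrence $f(3,q)\leq 2(q+1)f(3,q-1)$ you only get $m\approx 2(q+1)$ blocks, while the blocks live in a family of size exponential in $f(3,q-1)$, i.e.\ doubly exponential in $q$; the structural fact that every letter of $\Sigma$ appears in every window of length $f(3,q-1)$ does not cut this family down to anything like linear size (the blocks can still realise essentially arbitrary orderings and multiplicities of the remaining letters). So the proposed induction on the alphabet size stalls at exactly the point where the argument has to happen, and there is no evident repair within that framework.

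The idea you are missing is to pigeonhole not on blocks between occurrences of a fixed letter, but on \emph{minimal copies of $Z_2$}: words that contain $Z_2$ but all of whose proper subwords avoid $Z_2$. These have a very rigid structure --- either $aaa$ or $ab_1^{j_1}\cdots b_r^{j_r}a$ with the $b_i$ distinct and each $j_i\in\{1,2\}$ --- so there are at most $2^qq!-1$ of them over a $q$-letter alphabet. Now cut a word of length $(f(2,q)+1)(t(2,q)+1)-1$ into $t(2,q)+1$ consecutive windows of length $f(2,q)=2q+1$, each pair separated by one extra letter. Every window contains a minimal $Z_2$-copy, so two windows contain the \emph{same} minimal $Z_2$-copy $v$; since these two occurrences of $v$ are separated by at least one letter, substituting the $Z_2$-structure of $v$ into the outer parts of $Z_3=Z_2xZ_2$ yields a copy of $Z_3$. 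The length is $(2q+2)2^qq!=2^{q+1}(q+1)!$ as required. The point is that the pigeonhole is run against a family (minimal $Z_2$-copies) whose size $2^qq!$ is matched by the number of windows you can afford, rather than against the unstructured family of inter-occurrence blocks; no induction on $q$ is needed.
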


\begin{proof}
Say that a word $w$ is $2$-minimal if it contains $Z_2$ but every subword avoids $Z_2$. If $w$ is $2$-minimal, it is easy to check that either $w = aaa$ for a fixed letter $a$ or $w = ab_1^{j_1} \dots b_r^{j_r} a$, where all of the $b_i$ are distinct and $j_i \in \{1, 2\}$ for all $i$. Thus, the number $t(2, q)$ of $2$-minimal words over an alphabet of size $q$ is 
\[q + \sum_{r=1}^{q-1} q(q-1)\dots (q - r) 2^r \leq q! \sum_{r = 0}^{q-1} 2^r \leq 2^q q! - 1.\]
Now consider a word of length $(f(2,q) + 1)(t(2,q) + 1) - 1$ of the form
\[\underbrace{\underbrace{ab \dots c}_{f(2,q)} x \underbrace{hi \dots k}_{f(2,q)} y \dots z \underbrace{rs \dots t}_{f(2,q)}}_{t(2,q) + 1}.\]
Each word of length $f(2,q)$ contains a $2$-minimal word. Therefore, since there are $t(2,q) + 1$ words of length $f(2,q)$ and only $t(2,q)$ $2$-minimal words, two of the corresponding $2$-minimal words must be the same. This easily yields a copy of $Z_3$. Since
\[(f(2,q) + 1)(t(2,q) + 1) - 1 \leq (2q + 2) 2^q q! = 2^{q+1} (q+1)!,\]
the result follows.
\end{proof}

The interested reader may wish to skip to Section~\ref{sec:z3}, where we improve the estimate above to $f(3,q) = O(2^q q!)$ and show that this is tight up to a constant factor. For now, we continue to focus on the general case, combining Lemmas~\ref{lem:recur} and \ref{lem:base} to prove the required upper bound on $f(n,q)$.

\begin{theorem} For $n \geq 3$ and $q \geq 35$,
\[f(n,q) \leq q^{q^{\iddots^{q}}}
 \raisebox{-10pt}[0pt][0pt]{\hspace*{-24pt}\dbrace{11pt}}
 \raisebox{0pt}[0pt][0pt]{\hspace*{-11pt}\scriptsize{$n$-$1$}}
.\]
\end{theorem}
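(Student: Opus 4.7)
The plan is to prove by induction on $n \geq 3$ the slight strengthening
\[
f(n,q) \leq T_{n-1}/2,
\]
where $T_k$ denotes the tower of $q$'s of height $k$, i.e.\ $T_1=q$ and $T_k=q^{T_{k-1}}$. This implies the theorem immediately.

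For the base case $n=3$, I would combine Lemma~\ref{lem:base} with Stirling's formula to verify that $2^{q+2}(q+1)! \leq q^q$ for every $q \geq 35$. Taking base-$2$ logarithms and applying Stirling's approximation, this reduces to an inequality of roughly the form $(\log_2 e - 1)\,q \geq O(\log q)$, with $\log_2 e - 1 \approx 0.44$, which is a routine explicit check at $q=35$. The hypothesis $q \geq 35$ enters the argument essentially only here, to make this Stirling-type numerical estimate go through.

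For the inductive step, suppose $f(n,q) \leq T_{n-1}/2$. Applying Lemma~\ref{lem:recur} and bounding each of the two factors by twice itself yields
\[
f(n+1,q) \leq (f(n,q)+1)(q^{f(n,q)}+1) - 1 \leq 4\,f(n,q)\,q^{f(n,q)} \leq 2\,T_{n-1}\,q^{T_{n-1}/2}.
\]
To conclude $f(n+1,q) \leq T_n/2 = q^{T_{n-1}}/2$ it therefore suffices to verify $4 T_{n-1} \leq q^{T_{n-1}/2}$, which holds with enormous slack since already $T_{n-1} \geq T_2 = q^q \geq 35^{35}$.

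The only real subtlety is the base case. One has to strengthen the inductive hypothesis by a factor of $2$ (i.e.\ use $T_{n-1}/2$ rather than $T_{n-1}$) in order to absorb the polynomial overhead $4 f(n,q)$ coming from Lemma~\ref{lem:recur}, and correspondingly check that Lemma~\ref{lem:base} already produces the sharper bound $f(3,q) \leq q^q/2$ once $q \geq 35$. Once this Stirling threshold is cleared, the induction is essentially automatic because the tower on the right dominates any polynomial correction by an astronomical margin.
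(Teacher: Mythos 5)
Your proposal is correct and follows essentially the same route as the paper: an induction driven by Lemma~\ref{lem:recur}, with the base case supplied by Lemma~\ref{lem:base} and the threshold $q \geq 35$, and with the inductive hypothesis strengthened (you divide by $2$, the paper proves $q\,f(n,q) \leq T_{n-1}$, i.e.\ divides by $q$) precisely so that the tower absorbs the polynomial overhead $(f+2)q^f$ from the recursion. The numerical check at $q=35$ does go through (the paper's sharper claim $2^{q+1}(q+1)! \leq q^{q-1}$ already holds there, barely), so nothing further is needed.
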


\begin{proof}
We will prove by induction on $n$ the stronger result that 
\[q f(n, q) \leq  q^{q^{\iddots^{q}}}
 \raisebox{-10pt}[0pt][0pt]{\hspace*{-24pt}\dbrace{11pt}}
 \raisebox{0pt}[0pt][0pt]{\hspace*{-11pt}\scriptsize{$n$-$1$}}
.\]
For the base case $n = 3$, the result follows from Lemma~\ref{lem:base} since $f(3,q) \leq 2^{q+1} (q+1)! \leq q^{q-1}$ for $q \geq 35$. Writing $f := f(n,q)$, we will assume that $q f \leq T$, for some $T \geq q^{q}$, and show that $f(n+1, q) \leq q^{T - 1}$, from which the required result follows. By Lemma~\ref{lem:recur}, we have
\[f(n+1, q) \leq (f+1)(q^f+1) - 1 = f q^f + q^f + f \leq (f+2)q^f\]
and, therefore,
\[f(n+1, q) \leq \left(\frac{T}{q} + 2\right) q^{T/q} \leq T q^{T/q} \leq q^{T-1},\]
as required.
\end{proof}

\section{Applying the local lemma} \label{sec:local}

As an illustration of the main idea behind our proof, we will initially focus on the case $n = 3$, showing that $f(3,q) \geq q^{q- o(q)}$. In order to state the version of the 
Lov\'asz local lemma that we will need (see, for example, \cite{AS}), we say that a directed graph $D = (V, E)$ with $V=\{1,\ldots,n\}$ is a \emph{dependency digraph} for the set of events $A_1, A_2, \dots, A_n$ if for each $i$, $1 \leq i \leq n$, the event $A_i$ is mutually independent of all the events $\{A_j:(i,j) \not \in E\}$.

\begin{lemma} \label{lem:LLL}
Suppose that $D=(V,E)$ is a dependency digraph for the events $A_1, A_2, \dots, A_n$ with all outdegrees at most $d$. If $\textrm{Pr}[A_i] \leq p$ for all $i$ and $e p (d+1) \leq 1$, then 
$$\textrm{Pr}\left[ \bigcap_{i=1}^n \overline{A_i} \right] \geq \left(1-\frac{1}{d+1}\right)^n \geq e^{-n/d} > 0.$$
\end{lemma}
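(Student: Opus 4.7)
The plan is to prove this by the standard inductive argument underlying the Lov\'asz Local Lemma. The workhorse is the auxiliary claim that, for every subset $S \subseteq \{1,\dots,n\}$ and every index $i \notin S$,
\[
\Pr\!\left[A_i \,\middle|\, \bigcap_{j \in S} \overline{A_j}\right] \leq \frac{1}{d+1}.
\]
I would prove this by induction on $|S|$; the base case $|S|=0$ is immediate from the hypothesis $p \leq 1/(e(d+1)) \leq 1/(d+1)$.

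For the inductive step, partition $S$ into $S_1 = \{j \in S : (i,j) \in E\}$ and $S_2 = S \setminus S_1$; the outdegree assumption gives $|S_1| \leq d$. Writing the conditional probability as a ratio,
\[
\Pr\!\left[A_i \,\middle|\, \bigcap_{j \in S} \overline{A_j}\right] = \frac{\Pr\!\left[A_i \cap \bigcap_{j \in S_1} \overline{A_j} \,\middle|\, \bigcap_{k \in S_2} \overline{A_k}\right]}{\Pr\!\left[\bigcap_{j \in S_1} \overline{A_j} \,\middle|\, \bigcap_{k \in S_2} \overline{A_k}\right]},
\]
I would bound the numerator by $\Pr[A_i \mid \bigcap_{k \in S_2} \overline{A_k}] = \Pr[A_i] \leq p$, using that $A_i$ is mutually independent of $\{A_k : k \in S_2\}$. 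For the denominator, enumerating $S_1 = \{j_1,\dots,j_\ell\}$ with $\ell \leq d$ and applying the chain rule, each factor is at least $1 - 1/(d+1)$ by the inductive hypothesis applied to strictly smaller subsets, so the product is at least $(1-1/(d+1))^d \geq 1/e$, where the final step uses $(1+1/d)^d \leq e$. Dividing yields a ratio of at most $ep \leq 1/(d+1)$, which is exactly the hypothesis $ep(d+1) \leq 1$ rearranged.

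Given the auxiliary claim, the lemma follows from one final chain-rule expansion,
\[
\Pr\!\left[\bigcap_{i=1}^n \overline{A_i}\right] = \prod_{i=1}^n \Pr\!\left[\overline{A_i} \,\middle|\, \bigcap_{j<i} \overline{A_j}\right] \geq \left(1-\frac{1}{d+1}\right)^n,
\]
and the inequality $(1-1/(d+1))^n \geq e^{-n/d}$ follows by taking logarithms and using $\ln(1+1/d) \leq 1/d$. The only genuinely subtle point is the numerator bound in the inductive step, which requires the formal definition of mutual independence of $A_i$ from the collection $\{A_k : (i,k) \notin E\} \supseteq \{A_k : k \in S_2\}$; everything else is mechanical manipulation of conditional probabilities.
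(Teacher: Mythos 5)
Your argument is the standard inductive proof of the symmetric Lov\'asz Local Lemma, and it is correct; the paper does not prove this lemma at all but simply cites Alon and Spencer, where essentially this exact argument appears. All the key steps check out: the base case uses $p \leq 1/(e(d+1)) \leq 1/(d+1)$, the numerator bound correctly invokes mutual independence from $\{A_k : k \in S_2\}$, the denominator bound $(1-1/(d+1))^d \geq 1/e$ is right, and the final inequality $(1-1/(d+1))^n = (1+1/d)^{-n} \geq e^{-n/d}$ follows as you say. The only point left tacit is that the conditioning events have positive probability (so the conditional probabilities are defined), but this is established along the way by the same induction and is a standard omission.
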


\begin{theorem} \label{thm:3LLL}
$f(3,q) \geq q^{q- o(q)}$.
\end{theorem}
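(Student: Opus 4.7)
The plan is to establish $f(3,q) \geq q^{q-o(q)}$ by applying the Lov\'asz local lemma (Lemma~\ref{lem:LLL}) to a carefully chosen non-uniform random word model, circumventing the inadequacy of the uniform model pointed out in the excerpt. In a uniform random word of length $N$ over $[q]$, the bad event $E_{i,a,b,c}$ corresponding to a $Z_3$ occurrence with part-lengths $(a,b,c)$ at starting position $i$ has probability $q^{-(3a+b)}$, which is independent of $c$; summing over $c$ yields only $N = O(q^2)$, which is why the excerpt emphasises that the uniform model is insufficient.

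Instead, I would work with a random word where every letter's occurrences are forced to be spread apart. A natural candidate is the block-permutation model, in which the word of length $N$ is the concatenation of $N/q$ independent uniform random permutations of $[q]$; this forces each letter to appear exactly once in every window of $q$ consecutive positions, thereby ``separating all instances of any given letter.'' The key structural consequence is that, for $E_{i,a,b,c}$ to have positive probability, the four copies of the letter $A[1]$ in the hypothetical $Z_3$ occurrence must lie in four distinct blocks of size $q$, forcing the linear constraint $3a + 2b + c \geq 2q + 1$. In particular, when $a$ and $b$ are small (the otherwise dominant contribution to the LLL sum), $c$ must be comparable to $q$.

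Next I would apply the asymmetric version of the local lemma with weights $x_{E_{i,a,b,c}}$ chosen geometrically in $(a,b,c)$, e.g., of the form $\alpha^{a}\beta^{b}\gamma^{c}$ with $\alpha, \beta, \gamma \in (0,1)$ appropriately tuned. Two events $E_{i,a,b,c}$ and $E_{i',a',b',c'}$ are dependent precisely when their position intervals overlap, so the number of overlapping events with fixed $(a', b', c')$ is at most $L + L'$ where $L = 4a+2b+c$, and the dependency sum $\sum_{E' \sim E}x_{E'}$ converges as a weighted geometric series. The constraint $3a' + 2b' + c' \geq 2q+1$ coming from the block structure contributes an exponentially small factor $\gamma^{2q - O(1)}$ in the small-$(a,b)$ regime that dominates the LLL sum, and this is precisely the saving that allows $N$ to be taken as large as $q^{q - o(q)}$; after verifying the condition $\Pr[E] \leq x_E \prod_{E' \sim E}(1 - x_{E'})$, we obtain a word of this length avoiding $Z_3$ with positive probability.

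I expect the main obstacle to be twofold: carefully computing $\Pr[E_{i,a,b,c}]$ in the block-permutation model when the parts $A$, $B$, or $C$ straddle block boundaries (which produces several sub-cases that need to be combined), and tuning the LLL weights $(\alpha, \beta, \gamma)$ so that both $\Pr[E] \leq x_E \prod_{E' \sim E}(1 - x_{E'})$ and the target length $N = q^{q-o(q)}$ can be achieved simultaneously. The separation structure of the non-uniform model --- the quantitative constraint linking $c$ to $q$ when $a$ and $b$ are small --- is the engine that converts the polynomial-in-$q$ barrier of the uniform model into the tower-type bound.
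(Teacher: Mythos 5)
Your overall strategy (a non-uniform random model plus the local lemma) is the right one, but the specific model you chose does not work, and the gap is fatal rather than technical. Concatenating independent random permutations of the whole alphabet $[q]$ only guarantees that two occurrences of the same letter lie in \emph{different} blocks; it does not keep them far apart. A letter can appear at the end of one block and again near the start of the next, so copies of $Z_2$ of length $3$ (patterns $x\beta x$ with $x$ and $\beta$ single letters) occur at block boundaries with probability about $1/q$ per boundary. A word of length $N$ therefore contains about $2N/q^2$ such patterns, distributed essentially uniformly over the $q(q-1)$ possible pairs $(x,\beta)$, and by the birthday paradox two identical ones (at distinct boundaries, hence disjoint and separated) appear with high probability once $N \gg q^3$, yielding a copy of $Z_3$. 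So your model almost surely \emph{contains} $Z_3$ already at polynomial length, and no choice of LLL weights can certify the contrary. The concrete manifestation of this in your write-up is the claim that the constraint $3a+2b+c \geq 2q+1$ yields an exponentially small factor $\gamma^{2q-O(1)}$: since $\Pr[E_{i,a,b,c}]$ does not decay in $c$ at all (it is roughly $q^{-(3a+b)}$), taking $x_E$ geometric in $c$ makes the required inequality $\Pr[E] \leq x_E \prod_{E'\sim E}(1-x_{E'})$ fail for the events with $a=b=1$ and $c$ large --- exactly the events your constraint forces you to confront.

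The paper's model repairs precisely this defect. The alphabet is split into $t=\log q$ classes $L_1,\dots,L_t$ of size $S=q/\log q$, and the word is built from length-$S$ blocks where block $I_{it+j}$ is a random permutation of $L_j$ only. Two occurrences of the same letter are then separated by $t-1$ full intervening blocks, i.e.\ by distance at least $T=(t-1)S = q-o(q)$. Consequently \emph{every} copy of $Z_2$ has length at least $T$, so a copy of $Z_3=XYX$ forces two disjoint identical stretches each containing $t-1$ complete permutation blocks; such a coincidence has probability $(S!)^{-(t-1)} = q^{-q+o(q)}$, and the symmetric local lemma (no weight tuning needed) then handles $N = S!^{\,t-1} = q^{q-o(q)}$ with dependency degree only $O(tN/S)$. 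If you want to salvage your approach, the key missing idea is this forced lower bound of $q-o(q)$ on the length of every $Z_2$-copy, which converts the ``$\Pr[E]$ is independent of $c$'' obstruction into ``$\Pr[E]$ is at most $q^{-q+o(q)}$ for every event that can occur at all.''
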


\begin{proof}
We begin by splitting our alphabet arbitrarily into $t = \log q$ parts $L_1, L_2, \dots, L_t$, each of size $S:=\frac{q}{\log q}$. We generate a random word by placing letters in a series of successive intervals $I_1, I_2, \dots$, each of length $S$, as follows: first, fill $I_1$ with a random permutation of the letters from $L_1$; then apply the same process in $I_j$ for each $j = 2, 3, \dots, t$, that is, fill $I_j$ with a permutation of the letters from $L_j$; for interval $I_{t+1}$ we reuse the letters from $L_1$, for interval $I_{t+2}$ we reuse the letters from $L_2$ and so on, where for the interval $I_{it + j}$ we reuse the letters from $L_j$.

Note that, because of how we place the letters, for any two instances of the same letter, there are at least $t-1$ consecutive intervals $I_j$ of length $S$ between them. That is, every copy of $Z_2$ has length at least $T = (t-1)S$ and includes $t-1$ consecutive intervals $I_j$. Therefore, in order to find a copy of $Z_3$ in a word of this form, we must find two disjoint equal intervals of length $T$ consisting of $t-1$ intervals, each with the same $t-1$ permutations of length $S$. We will now use the local lemma to show that there is a word of length $N \geq q^{q - o(q)}$ containing no such pair and, thus, containing no copy of $Z_3$.

Suppose, therefore, that we have used the process described above to generate a random word of length $N=S!^{t-1}=q^{q-o(q)}$. 
Let $A_1, A_2, \dots$ be the collection of events corresponding to the existence of two disjoint intervals of length $T$, each consisting of $t-1$ of the intervals of length $S$ described above, containing the same subword. Note that any such pair of intervals of length $T$ will overlap with at most $4tN/S$ other such pairs of intervals of length $T$. Indeed, there are at most 
$2(2t-3)$ ways to choose an interval of length $T$ overlapping with one of the two intervals forming the pair. For the other interval 
there are at most $N/S$ possibilities, each given by the first interval $I_j$ of length $S$ it contains.  

Note that $\textrm{Pr}[A_i]=S!^{-(t-1)}$ for each $i$. Applying the local lemma, Lemma~\ref{lem:LLL}, with $p = S!^{-(t-1)}$ and $d = 4tN/S$, we see that since
$ep(d+1) \leq 12t/S < 1$, there exists a word of length $N$ such that none of the events $A_i$ hold, as required. By the discussion above, this word contains no copy of $Z_3$, so the proof is complete.
\end{proof}

We also note a slight strengthening of this result which will be useful in the next section. In the proof, we will freely use notation from the proof above.

\begin{theorem} \label{thm:3LLL2}
There are at least $q^{q^{q-o(q)}}$ words $w$ of length $q^{q - o(q)}$ over an alphabet of size $q$ such that $w$ avoids $Z_3$ and there is a distinguished letter $d$ such that any subword of $w$ not containing the letter $d$ avoids $Z_2$.
\end{theorem}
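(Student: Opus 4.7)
The plan is to adapt the random construction from the proof of Theorem~\ref{thm:3LLL} by designating one letter of the alphabet as a distinguished letter $d$ and inserting it periodically as a separator, so that $d$ automatically witnesses the required property. More precisely, I would partition the remaining $q - 1$ letters into $t = \log q$ parts $L_1, \ldots, L_t$ of size $S = (q-1)/\log q$, form blocks $B_i = \pi_1^i(L_1) \pi_2^i(L_2) \cdots \pi_t^i(L_t)$ where each $\pi_j^i$ is an independent uniformly random permutation of $L_j$, and set $w = B_1 \, d \, B_2 \, d \cdots d \, B_B$ for some $B$ to be chosen. Since each block uses each non-$d$ letter exactly once, every subword of $w$ not containing $d$ lies inside a single $B_i$ and is therefore $Z_2$-free, giving the $d$-property.

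To guarantee in addition that $w$ avoids $Z_3$, I would apply the local lemma to the following events. Call a length-$((t-1)S + 1)$ window in $w$ a \emph{type-B substring} if it consists of $t - 1$ consecutive $I_j$-intervals together with a single separator $d$ in one of the $t$ possible relative positions; equivalently, any interval-aligned window straddling exactly one block boundary. For each pair of disjoint type-B substrings with the same offset modulo $q$, let the bad event be that they coincide as strings. Each such event has probability exactly $(1/S!)^{t-1}$, depends on at most four blocks, and the maximum out-degree in the natural dependency digraph is $O(tB)$, so the LLL condition holds provided $B \leq c (S!)^{t-1}/t$ for a suitable constant $c$. This produces a word of length $N = Bq = q^{q - o(q)}$, of the same order as in Theorem~\ref{thm:3LLL}.

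The main step is to verify that any $Z_3 = X c X$ in $w$ triggers one of these events. Since no letter repeats within a block, any $Z_2$ in $w$ must span at least two blocks. If the repeated letter is not $d$ and both occurrences lie in adjacent blocks, then direct computation shows that an interval-aligned substring of $X$ straddling the $d$-separator between the two blocks is exactly a type-B substring; if the two occurrences lie in non-adjacent blocks, I would pass to the sub-$Z_2$ between two consecutive occurrences of the repeated letter and reduce to the previous case; and if the repeated letter is $d$ itself, then $X$ contains a full interior block flanked by separators, from which a type-B window can be carved out at either end. In each case, $X_1 = X_2$ forces the corresponding type-B substrings inside $X_1$ and $X_2$ to coincide, producing a bad event. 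To count the good words, I would use the quantitative bound $\Pr[\bigcap \overline{A_i}] \geq e^{-n/d} = e^{-O(B)}$ from Lemma~\ref{lem:LLL}; combined with the $(S!)^{tB}$ total random words and the estimate $tB \log S! \approx N \log q = q^{q - o(q)}$, this gives at least $2^{q^{q - o(q)}} = q^{q^{q - o(q)}}$ valid choices of $w$. The main obstacle is the case analysis, in particular ensuring that even the shortest possible $Z_2$'s, of length as small as $q - S + 2$, contain a canonical type-B substring whose equality across $X_1$ and $X_2$ is forced by $X_1 = X_2$.
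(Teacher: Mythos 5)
Your proposal is correct and follows essentially the same route as the paper: insert the distinguished letter $d$ between consecutive full rounds $I_{it+1},\dots,I_{(i+1)t}$ of the random-permutation construction from Theorem~\ref{thm:3LLL} (so that $d$-free subwords have all distinct letters and hence no $Z_2$), apply the local lemma to the same kind of ``two equal aligned windows of $t-1$ interval-contents'' events, and count the good words by multiplying the $e^{-n/d}$ lower bound from Lemma~\ref{lem:LLL} by the $(S!)^{tB}$ equally likely outcomes. The extra bookkeeping you introduce (type-B substrings carrying the separator $d$, offsets modulo $q$) is not needed beyond what Theorem~\ref{thm:3LLL} already provides, since the $d$'s sit at deterministic positions, but it does no harm.
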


\begin{proof}
The proof is almost exactly the same as the proof of Theorem~\ref{thm:3LLL}, except we set aside the distinguished letter $d$ at the start, only using it immediately after each interval of the form $I_{it}$ to separate it from the interval $I_{it+1}$. By construction, the word between any two successive instances of $d$ will consist of the intervals $I_{it+1}, I_{it+2}, \dots, I_{(i+1)t}$. But the union of these intervals contains no repeated letters and, hence, no copy of $Z_2$, as required.

To count the number of words, note that the number of possible $N$-letter words generated by our random process is equal to $(S!)^{N/S}=q^{(1 - o(1))N}$, each occurring with the same probability. Since there are fewer than $(N/S)^2$ bad events $A_1, A_2, \dots$, each of which is independent of all but $4tN/S$ of the others, the local lemma, Lemma~\ref{lem:LLL}, tells us that with probability at least $e^{-(N/S)^2/(4tN/S)} = e^{-N/4tS}$ none of these bad events happen, so the process generates an appropriate word. In fact, there must be at least
\[ e^{-N/4tS}\cdot q^{(1 -o(1))N} \geq q^{(1 - o(1))N} \geq q^{q^{q - o(q)}}\]
appropriate words, completing the proof.
\end{proof}

The remainder of this section will be concerned with generalising the proof of Theorem~\ref{thm:3LLL} to give a local lemma proof of Theorem~\ref{thm:main1}. The reader who is willing to accept our word that such a generalisation is possible may skip to the start of the next section to see how a recursive procedure may also be used to finish the job. For the resolute, we state a more general form of the Lov\'asz local lemma (see \cite{AS}).

\begin{lemma} \label{lem:LLL2}
Suppose that $D=(V,E)$ is a dependency digraph for the events $A_1, A_2, \dots, A_n$. If there are real numbers $x_1,\ldots,x_n$ such that $0 \leq x_i < 1$ and $\textrm{Pr}[A_i] \leq x_i \prod_{(i,j) \in E} (1-x_j)$ for all $1 \leq i \leq n$, then 
$$\textrm{Pr}\left[ \bigcap_{i=1}^n \overline{A_i} \right] \geq \prod_{i=1}^n \left(1-x_i\right) > 0.$$
\end{lemma}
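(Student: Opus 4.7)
The plan is to establish Lemma~\ref{lem:LLL2} by the classical inductive argument of Erd\H{o}s and Lov\'asz. The main conceptual insight --- and what I would flag as the main obstacle for someone encountering this proof for the first time --- is identifying the correct stronger inductive statement. Rather than trying to lower bound $\textrm{Pr}\bigl[\bigcap_{i}\overline{A_i}\bigr]$ directly, one proves by induction on $|S|$ that, for every index $i$ and every $S \subseteq \{1,\ldots,n\}$ with $i \notin S$,
$$\textrm{Pr}\left[A_i \,\bigg|\, \bigcap_{j \in S}\overline{A_j}\right] \leq x_i.$$
This implies the lemma immediately via the chain rule:
$$\textrm{Pr}\left[\bigcap_{i=1}^n \overline{A_i}\right] = \prod_{i=1}^n \textrm{Pr}\left[\overline{A_i} \,\bigg|\, \bigcap_{j<i}\overline{A_j}\right] \geq \prod_{i=1}^n (1-x_i) > 0.$$

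For the induction, the base case $|S|=0$ is just the hypothesis $\textrm{Pr}[A_i] \leq x_i\prod_{(i,j)\in E}(1-x_j) \leq x_i$. For the inductive step I would split $S$ into $S_1 = \{j\in S:(i,j)\in E\}$ and $S_2 = S\setminus S_1$; if $S_1$ is empty the claim is immediate from mutual independence of $A_i$ and the events indexed by $S_2$, so assume $S_1\neq\emptyset$. Writing the conditional probability as a ratio,
$$\textrm{Pr}\left[A_i \,\bigg|\, \bigcap_{j\in S}\overline{A_j}\right] = \frac{\textrm{Pr}\left[A_i \cap \bigcap_{j\in S_1}\overline{A_j} \,\bigg|\, \bigcap_{j\in S_2}\overline{A_j}\right]}{\textrm{Pr}\left[\bigcap_{j\in S_1}\overline{A_j} \,\bigg|\, \bigcap_{j\in S_2}\overline{A_j}\right]},$$
I would bound the numerator above by $\textrm{Pr}[A_i] \leq x_i\prod_{(i,j)\in E}(1-x_j)$, dropping the intersection with the $\overline{A_j}$ for $j\in S_1$ and invoking mutual independence of $A_i$ from the events indexed by $S_2$. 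For the denominator I would list $S_1=\{j_1,\ldots,j_r\}$, expand via the chain rule, and apply the induction hypothesis to each of the $r$ factors (each conditioned on a set of size strictly less than $|S|$) to obtain the lower bound $\prod_{k=1}^r (1-x_{j_k})$. Cancelling this against the matching factors in $\prod_{(i,j)\in E}(1-x_j)$ in the numerator leaves at most $x_i$, completing the induction.

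The only routine bookkeeping point is checking that every conditional probability appearing in the argument is well-defined: this can be handled either by restricting to events of strictly positive probability --- a positivity that follows a posteriori from the very bound we are proving, applied to smaller collections --- or by adopting the standard convention that conditioning on a null event yields zero. Once this detail is dispatched, the proof reduces to the clean telescoping cancellation just described, so the real content of the argument is the choice of inductive hypothesis together with the separation of the conditioning set into neighbors and non-neighbors of $i$ in the dependency digraph.
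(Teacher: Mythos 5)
Your proof is correct: this is the classical inductive argument for the general Lov\'asz Local Lemma, with the right strengthened inductive hypothesis, the correct split of the conditioning set into neighbours and non-neighbours of $i$, and due attention to the positivity of the conditional probabilities. The paper itself gives no proof of this lemma --- it is quoted as a known result with a citation to Alon and Spencer --- and your argument is exactly the standard proof found there, so nothing further needs comparing.
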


{\bf First proof of Theorem~\ref{thm:main1}:}
We will generate random words in the same manner as in the proof of Theorem~\ref{thm:3LLL}. That is, we split our alphabet arbitrarily into $t = \log q$ parts $L_1, L_2, \dots, L_t$, each of size $S := \frac{q}{\log q}$, and generate a random word by placing letters in a series of successive intervals $I_1, I_2, \dots$, each of length $S$, as follows: first, fill $I_1$ with a random permutation of the letters from $L_1$; then apply the same process in $I_j$ for each $j = 2, 3, \dots, t$, that is, fill $I_j$ with a permutation of the letters from $L_j$; for interval $I_{t+1}$ we reuse the letters from $L_1$, for interval $I_{t+2}$ we reuse the letters from $L_2$ and so on, where for the interval $I_{it + j}$ we reuse the letters from $L_j$. Once again, we note that any two instances of the same letter must be at least a distance $T := (t-1)S$ apart, and so the shortest copy of $Z_2$ has length at least $T$.

Define $y_1 = T$ and, for $1 \leq i \leq n-2$, $y_{i+1}=(q/\log^4q)^{y_i}$. For each $1 \leq i \leq n-2$, we consider all bad events $A_{i,1}, \dots, A_{i, r_i}$ corresponding to the existence of two disjoint identical intervals of length $y_i$ appearing at distance at most $y_{i+1}$ from one another. If none of these events occur in a word $w$ generated as described above, we see, since every copy of $Z_2$ in $w$ has length at least $T$ and any two identical intervals of length $T$ are at least $y_2$ apart, that every copy of $Z_3$ in $w$ has length at least $y_2$. In turn, since any two identical intervals of length $y_2$ are at least $y_3$ apart, this implies that every copy of $Z_4$ in $w$ has length at least $y_3$. Iterating, we see that every copy of $Z_n$ in $w$ must have length at least $y_{n-1}$. Hence, for $w$ to contain $Z_n$, it must have length at least $y_{n-1}$, which is easily seen to satisfy the inequality
\[y_{n-1} \geq q^{q^{\iddots^{q-o(q)}}}
 \raisebox{-10pt}[0pt][0pt]{\hspace*{-24pt}\dbrace{11pt}}
 \raisebox{0pt}[0pt][0pt]{\hspace*{-11pt}\scriptsize{$n$-$1$}}.\]
It therefore remains to show that there exists an appropriate $w$ of length $y_{n-1} - 1$ such that none of the bad events $A_{i,1}, \dots, A_{i, r_i}$ for $i = 1, 2, \dots, n-2$ occur. To apply the local lemma, we need to analyse the dependencies between different events. Suppose, therefore, that $i$ and $j$ are fixed and we wish to determine how many of the events $A_{j, 1}, \dots, A_{j, r_j}$ a particular $A_{i,k}$ depends on. 

For $i \leq j$, there are at most $8 y_j y_{j+1}$ events $A_{j, \ell}$ that depend on $A_{i,k}$. Indeed, one of the elements in the pair of intervals corresponding to $A_{j, \ell}$ must be equal to one of the endpoints from the pair of intervals corresponding to $A_{i, k}$. There are $4$ choices for the endpoint and $2y_j$ choices for which of the elements corresponds to this endpoint.  Once these choices are made, they fix one of the intervals in the pair  corresponding to $A_{j, \ell}$ and the other interval may be chosen arbitrarily within distance $y_{j+1}$ from the first one, so there are $y_{j+1}$ choices. A similar argument applies when $i > j$ to show that there are at most $8 y_iy_{j+1}$ events $A_{j, \ell}$ that depend on $A_{i,k}$.

To estimate $\textrm{Pr}[A_{i,k}]$, note that any interval of length $y_i$ will fully contain at least $y_i/S - 2$ successive intervals of the form $I_j$ and, therefore, 
$$\textrm{Pr}[A_{i,k}] \leq S!^{-y_i/S + 2} \leq \left(\frac{e}{S}\right)^{y_i - 2S} \leq \left(\frac{\log^2 q}{q}\right)^{y_i}.$$ 
We will now apply the local lemma with $x_i := x_{i,k} = (\log^3 q/q)^{y_i}$ for all events $A_{i,k}$. By using that $n$ is fixed together with the inequality $1 - x \geq e^{-2x}$ for $0 \leq x \leq \frac{1}{2}$, we see that
\[(1 - x_j)^{8 y_{j+1}} = \left(1 - \left(\frac{\log^3 q}{q}\right)^{y_j}\right)^{8 y_{j+1}} \geq e^{-16 \left(\frac{\log^3 q}{q}\right)^{y_j} y_{j+1}} = e^{-16 (\log q)^{-y_j}}  \geq 2^{-1/n y_j}\]
and, therefore,
\begin{align*}
\textrm{Pr}[A_{i,k}] & \leq \left(\frac{\log^2 q}{q}\right)^{y_i} = \left(\frac{\log^3 q}{q}\right)^{y_i} \left(\frac{1}{\log q}\right)^{y_i}\\
& \leq x_i 2^{-y_i} = x_i 2^{-y_i/n} \cdot 2^{-y_i/n} \cdots 2^{-y_i/n} \\
& \leq x_i \prod_{j = 1}^{i-1} (1 - x_j)^{8 y_i y_{j+1}} \prod_{j = i}^{n-2} (1 - x_j)^{8 y_j y_{j+1}}\\
& \leq x_i \prod_{j=1}^{n-2} \prod_{(j,\ell) \sim (i,k)} (1-x_j).
\end{align*}
We may therefore apply the local lemma to obtain the desired word, completing the proof.
{\hfill$\Box$\medskip}

\section{Stepping up} \label{sec:stepup}

We will begin this section by completing our second proof of Theorem~\ref{thm:main1}. This is based on a simple recursion encapsulated in Lemma~\ref{lem:stepup} below. To state this result, we need a few definitions.

Let $m(n,q)$ denote the number of words over an alphabet of size $q$ which avoid $Z_n$. Note the inequality
$f(n,q) \geq \log_q m(n,q)$, which follows since the number of words over a $q$-letter alphabet of length less than $f$ is $1+q+q^2+\cdots+q^{f-1} \leq q^f$.
Let $S(n,q)$ denote the set of all words $w$ over an alphabet of size $q$ which avoid $Z_n$ and have a distinguished letter, say $d$, such that any subword of $w$ not containing the letter $d$ avoids $Z_{n-1}$. We let $F(n,q)$ denote the length of the longest word in $S(n,q)$ and $M(n,q) = |S(n,q)|$. By definition,
$f(n,q) > F(n,q)$ and $m(n,q) \geq M(n,q)$. 

\begin{lemma} \label{lem:stepup}
$$M(n+1,q+2) \geq M(n,q)!$$ and $$F(n+1,q+2) \geq M(n,q).$$
\end{lemma}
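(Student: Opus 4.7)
The plan is a concrete construction. List the $N=M(n,q)$ distinct words of $S(n,q)$ as $w_1,\dots,w_N$ and adjoin two fresh letters $c$ and $d^*$ to get an alphabet of size $q+2$. For each permutation $\pi$ of $[N]$ I would set
\[
W_\pi \;=\; c\,w_{\pi(1)}\,d^*\,c\,w_{\pi(2)}\,d^*\,\cdots\,d^*\,c\,w_{\pi(N)},
\]
so the $N$ blocks $c\,w_{\pi(i)}$ are separated by single $d^*$'s. Then $|W_\pi|\ge 2N-1\ge N$, and the map $\pi\mapsto W_\pi$ is injective because parsing at the $d^*$'s recovers the blocks and the $w_i$'s are distinct. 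Hence $F(n+1,q+2)\ge N$ and $M(n+1,q+2)\ge N!$ will both follow once I verify $W_\pi\in S(n+1,q+2)$.

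For the distinguished-letter condition I would take $d^*$: any subword of $W_\pi$ avoiding $d^*$ lies in a single block $c\,w_{\pi(i)}$, and such a block avoids $Z_n$ because $w_{\pi(i)}$ does, while a hypothetical $\phi(Z_n)\subseteq c\,w_{\pi(i)}$ using the leading $c$ would begin with $c$, forcing $\phi(x_1)$ to start with $c$; but $\phi(x_1)$ appears $2^{n-1}\ge 2$ times in $\phi(Z_n)$, contradicting that only one $c$ occurs in $c\,w_{\pi(i)}$.

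To show $W_\pi$ avoids $Z_{n+1}$, I would assume for contradiction that $\phi(Z_{n+1})=\phi(Z_n)\,\phi(x_{n+1})\,\phi(Z_n)$ sits in $W_\pi$, giving two disjoint equal copies $A_1,A_2$ of $\phi(Z_n)$, and split on $k$, the number of $d^*$'s in $A_1$. The case $k=0$ contradicts the distinguished-letter check. For $k\ge 2$, the complete blocks $c\,w_{\pi(j)}$ lying strictly between consecutive $d^*$'s inside $A_1$ determine their indices (distinctness of the $w_i$'s) and thus pinpoint the location of $A_1$; by $A_1=A_2$ the copy $A_2$ would coincide with $A_1$, contradicting disjointness.

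The main obstacle is $k=1$. Here $A_1=s\,d^*\,p=A_2$ with $s$ a nonempty suffix of $c\,w_{\pi(\ell)}$, $p$ a nonempty prefix of $c\,w_{\pi(\ell+1)}$, and similarly for $A_2$ with indices $\ell'\neq\ell$. A length-matching argument handles the subcase in which $s$ includes the leading $c$ of its block: then $s$ must equal the corresponding suffix of $c\,w_{\pi(\ell')}$, and since suffixes of $w_{\pi(\ell')}$ cannot start with the new letter $c$ we get $s=c\,w_{\pi(\ell)}=c\,w_{\pi(\ell')}$, forcing $\ell=\ell'$. In the remaining subcase, $s$ is a pure suffix of $w_{\pi(\ell)}$ (no $c$), so $c$ and $d^*$ each occur exactly once in $\phi(Z_n)$; the same $2^{n-j}\ge 2$ counting argument forces both into $\phi(x_n)$, so the two copies of $\phi(Z_{n-1})$ comprising $\phi(Z_n)$ embed respectively as a suffix of $w_{\pi(\ell)}$ and as a prefix of $w_{\pi(\ell+1)}$, both equal to the same fixed string $\phi(Z_{n-1})$. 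Invoking the distinguished-letter property of each $w_j\in S(n,q)$ places $d_{\pi(\ell)},d_{\pi(\ell+1)},d_{\pi(\ell')},d_{\pi(\ell'+1)}$ inside $\phi(Z_{n-1})$; combined with $A_1=A_2$ and the recursive $Z_{n-1}$-structure, this should pin the positions of $\phi(Z_{n-1})$ inside the relevant $w_{\pi(\cdot)}$'s and force two of the indices to coincide, completing the contradiction. Unwinding this last case -- and in particular making precise how the two auxiliary letters $c,d^*$ interact with the inductive distinguished-letter machinery of $S(n,q)$ -- is the technical heart of the proof.
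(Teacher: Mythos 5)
Your construction has the same skeleton as the paper's: list the $M(n,q)$ words of $S(n,q)$ in an arbitrary order, join them with a fresh separator letter, and rule out a copy $XYX$ of $Z_{n+1}$ by cases on how many separators a copy $X$ of $Z_n$ contains. The $k=0$ and $k\ge 2$ cases, the injectivity of $\pi\mapsto W_\pi$, and the length count are all fine. But the final subcase of $k=1$ --- where $X=D\,\phi(x_n)\,D$ straddles exactly one separator, with the left copy of $D$ a subword of $w_{\pi(\ell)}$ and the right copy a subword of $w_{\pi(\ell+1)}$ --- is the crux, and what you offer there is not a proof. Knowing that $D$, being a copy of $Z_{n-1}$, must contain the distinguished letters of $w_{\pi(\ell)}$, $w_{\pi(\ell+1)}$, $w_{\pi(\ell')}$, $w_{\pi(\ell'+1)}$ yields no contradiction: a single string that is a copy of $Z_{n-1}$ and contains those letters can perfectly well occur as a subword of several distinct members of $S(n,q)$, and nothing ``pins the positions'' or forces two indices to coincide. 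Your block-initial marker $c$ is also spent to no effect here, since in this subcase the only $c$ in $X$ sits inside $\phi(x_n)$ and constrains nothing.

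The missing idea is how the paper spends its two extra letters. It does not prepend a marker; instead it renames the distinguished letter $c$ of $w_i$ to $c_1$ when $i$ is odd and to $c_0$ when $i$ is even, and then inserts the new separator $d$ (net alphabet growth: $c$ replaced by $\{c_0,c_1\}$, plus $d$, giving $q+2$). In the problematic subcase the two equal copies of $D$ then lie in consecutive blocks $u_i$ and $u_{i+1}$ of opposite parity; since $u_i$ contains no $c_0$ and $u_{i+1}$ contains no $c_1$, the common string $D$ contains neither, i.e.\ it avoids the distinguished letter of $w_i$, and the defining property of $S(n,q)$ forces $D$ to be $Z_{n-1}$-free --- the desired contradiction. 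Without some device of this kind, which makes the distinguished letters of adjacent blocks distinct and non-shared, the subcase you flag as ``the technical heart'' cannot be closed by the counting you describe, so the proposal as it stands has a genuine gap.
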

\begin{proof}
Let $c$ denote the distinguished letter in the words in $S(n,q)$. Writing $M:=M(n,q)$, consider any one of the $M!$
orderings of the words in $S(n,q)$, say $w_1,w_2,\ldots,w_M$. For $i$ odd, let $u_i$ be obtained from $w_i$ by changing every $c$ to $c_1$. For $i$ even, let $u_i$ be obtained from $w_i$ by changing every $c$ to $c_0$. Add a new distinguished letter $d$ and consider the word $w=u_1du_2du_3du_4d\ldots d u_M$ formed by placing a $d$ between each $u_i$ and $u_{i+1}$ and concatenating the sequence. The number of letters in $w$ is $q+2$, consisting of the original $q-1$ nondistinguished letters, the new letters $c_0,c_1$ replacing the old letter $c$ and the new distinguished letter $d$. The number of possible choices for $w$ is $M!$, one for each ordering of the words in $S(n,q)$. Moreover, the length of $w$ is $\sum_{w \in S(n,q)} (|w|+1) - 1 \geq |S(n,q)|=M(n,q)$. It will therefore suffice to show that $w \in S(n+1,q+2)$. 

Note that any subword of $w$ which does not contain the distinguished letter $d$ is a subword of some $u_i$ and, since $u_i$ is a copy of a word in $S(n,q)$, it does not contain $Z_n$. It only remains to show that $w$ does not contain a copy of $Z_{n+1}$. Suppose for contradiction that it does and let this subword be $XYX$, with $X$ a copy of $Z_{n}$. Neither $X$ contains two or more copies of the letter $d$, since between any two consecutive copies of $d$ there is a unique word which cannot then appear in both copies of $X$. If $X$ contains no $d$, then each of the two copies of $X$ is a subword of a $u_i$ (not necessarily the same). However, no $u_i$ contains $Z_{n}$, contradicting the fact that $X$ is a copy of $Z_{n}$. So each $X$ contains exactly one $d$. Write $X=ABA$ with $A$ a copy of $Z_{n-1}$. As $X$ contains exactly one $d$, this copy of $d$ must be in $B$ and each copy of $X$ is entirely contained in a subword of $w$ of the form $u_idu_{i+1}$ for some $i$ (which will be a  different $i$ for the left and right copy of $X$). As $i$ and $i+1$ have different parity, the distinguished letter of $u_i$ is not in $u_{i+1}$ and the distinguished letter of $u_{i+1}$ is not in $u_i$. Thus, the left and right copies of $A$ do not contain the distinguished letters of $u_i$ or of $u_{i+1}$. However, $w_i$ and $w_{i+1}$ are both in $S(n,q)$, so these copies of $A$ cannot contain a copy of $Z_{n-1}$, contradicting the fact that $A$ is a copy of $Z_{n-1}$. 
\end{proof} 

We may now complete our second proof of Theorem~\ref{thm:main1}.
\vspace{2mm}

{\bf Second proof of Theorem~\ref{thm:main1}:} We will begin by proving inductively that
\[M(n,q) \geq q^{q^{\iddots^{q - o(q)}}}
 \raisebox{-10pt}[0pt][0pt]{\hspace*{-24pt}\dbrace{11pt}}
 \raisebox{0pt}[0pt][0pt]{\hspace*{-11pt}\scriptsize{$n$}}\]
 for all $n \geq 3$. For $n=3$, this follows from Theorem~\ref{thm:3LLL2}. For the induction step, we use Lemma~\ref{lem:stepup} to conclude that
\[M(n+1, q+2) \geq M(n, q)! \geq \left(\frac{M(n,q)}{e}\right)^{M(n,q)} \geq q^{M(n,q)},\]
which easily implies the required result. To complete the proof of the theorem, note that Theorem~\ref{thm:3LLL} handles the case $n=3$, while, for $n \geq 4$, Lemma~\ref{lem:stepup} and our bound on $M(n,q)$ together imply that
\[f(n,q) \geq M(n-1, q - 2) \geq q^{q^{\iddots^{q - o(q)}}}
 \raisebox{-10pt}[0pt][0pt]{\hspace*{-24pt}\dbrace{11pt}}
 \raisebox{0pt}[0pt][0pt]{\hspace*{-11pt}\scriptsize{$n$-$1$}},\]
 as required.
{\hfill$\Box$\medskip}

We now turn to the proof of Theorem~\ref{thm:main2}. This is similar in broad outline to the proof of Theorem~\ref{thm:main1} above, where we produced words which are $Z_{n+1}$-free by concatenating a collection of $Z_n$-free words, separating them by instances of an extra distinguished letter. However, here, in order to avoid adding extra letters to our alphabet, we will instead separate our $Z_n$-free words with long strings of $1$s. This alteration makes the proof considerably more delicate. 

To proceed, we let $1_{x}$ denote the word consisting of $x$ ones and $B(n)$ the largest set of binary words $w$ of the same length with the following properties: 

\begin{enumerate} 
\item $w$ begins and ends with a zero. 
\item $w$ does not contain $1_{2n+1}$ as a subword. 
\item Any subword of $1_{2n}w1_{2n}$ not containing $1_{2n}$ is $Z_{n-1}$-free. 
\item $1_{2n}w1_{2n}$ is $Z_n$-free. 
\item Let $w'$ be obtained from $w$ by adding a one to each copy of $1_{2n}$ in $w$. Then $1_{2n+1}w'1_{2n+1}$ is $Z_n$-free.
\end{enumerate}

The key to proving Theorem~\ref{thm:main2} is the following lemma relating $|B(n+1)|$ to $|B(n)|$.

\begin{lemma} \label{lem:stepup2}
For $n \geq 6$, $$|B(n+1)| \geq |B(n)|!$$
\end{lemma}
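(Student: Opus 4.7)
My plan is to build $W \in B(n+1)$ from any ordering $w_1,\dots,w_M$ of the $M=|B(n)|$ words in $B(n)$, imitating the construction of Lemma~\ref{lem:stepup} but using long $1$-runs in place of the distinguished letters $c_0,c_1,d$. Let $w_i'$ denote the word of Property~5 of $B(n)$, obtained from $w_i$ by extending each maximal $1_{2n}$-run to a $1_{2n+1}$-run. I will take
\[
W\;=\;w_1'\,1_{2n+2}\,w_2'\,1_{2n+2}\,\cdots\,1_{2n+2}\,w_M'.
\]
Property~2 of $B(n)$ ensures $w_i$ has no $1_{2n+1}$-run, so $w_i \mapsto w_i'$ is injective; therefore distinct orderings produce distinct $W$'s ($W$ determines the ordering by splitting at its $1_{2n+2}$-separators and contracting each internal $1_{2n+1}$-run back to $1_{2n}$). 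It remains to verify that every such $W$ lies in $B(n+1)$.

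Properties~1 and~2 of $B(n+1)$ are immediate, since each $w_i'$ starts and ends with $0$ and the only maximal $1$-runs of $W$ are the separators (length $2n+2$) and internal runs of length at most $2n+1$. For Property~3, any subword of $1_{2n+2}W1_{2n+2}$ avoiding $1_{2n+2}$ lies between two consecutive full $1_{2n+2}$-runs and so is contained in some $1_{2n+1}w_i'1_{2n+1}$, which Property~5 of $w_i$ declares $Z_n$-free. A key auxiliary observation I need is an analogue of Property~3 for $w_i'$: any subword of $1_{2n+1}w_i'1_{2n+1}$ avoiding $1_{2n}$ is $Z_{n-1}$-free. This follows because such a subword has all $1$-runs of length at most $2n-1$, and can be realized as a substring of $1_{2n}w_i1_{2n}$ also avoiding $1_{2n}$ (each of its $1$-runs fits inside the corresponding $1_{2n}$-run, short $1$-run, or boundary $1_{2n}$-run of $w_i$), at which point Property~3 of $w_i$ applies.

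For Property~4, suppose $1_{2n+2}W1_{2n+2}$ contains a copy $XYX$ of $Z_{n+1}$ with $X$ a copy of $Z_n$. Following Lemma~\ref{lem:stepup}, each $X$ meets at least one full $1_{2n+2}$-run (else $X$ lies in a $Z_n$-free gap by Property~3) and at most one (else $X$ contains a substring $1_{2n+2}w_j'1_{2n+2}$, which by distinctness of the $w_j'$'s pins down the position of $X$ and forces the two copies to coincide). Write $X=\alpha\,1_{2n+2}\,\beta$ with $\alpha,\beta$ avoiding $1_{2n+2}$, and $X=ABA$ with $A$ a copy of $Z_{n-1}$. Since $|A|\ge 2^{n-1}-1>2n+1$ for $n\ge 6$, $A$ cannot be a pure $1$-run, which forces the $1_{2n+2}$ inside $X$ to lie strictly inside $B$; the left and right copies of $A$ then sit respectively in $1_{2n+1}w_{j_1}'1_{2n+1}$ and $1_{2n+1}w_{j_1+1}'1_{2n+1}$ (from the left $X$), with analogous statements for the right $X$ using some $j_2>j_1$. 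By the auxiliary observation above, if $A$ avoids $1_{2n}$ we immediately contradict $A$ being a copy of $Z_{n-1}$.

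The remaining---and main---obstacle is to rule out $A$ containing a $1_{2n}$-substring. Since $A$ avoids $1_{2n+2}$, any such $1_{2n}$ must arise either from taking $2n$ consecutive $1$s out of a $1_{2n+1}$-extension of some $w_j'$, or by cutting off one end of a $1_{2n+2}$-separator or boundary at an endpoint of $A$. I plan to leverage that $A$ (as a string) appears at four specific positions in $W$---two inside each $X$, aligned to the $w_{j_1}'$-, $w_{j_1+1}'$-, $w_{j_2}'$-, and $w_{j_2+1}'$-regions---and show by case analysis that every possible location of a $1_{2n}$ inside $A$ either forces a coincidence of two distinct $w_j'$'s (contradicting injectivity) or manufactures a copy of $Z_n$ inside some $1_{2n}w_j1_{2n}$ (contradicting Property~4 of $B(n)$). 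Property~5 of $B(n+1)$ is proved by the same argument applied to $1_{2n+3}W'1_{2n+3}$, using $1_{2n+3}$-separators throughout; the enlarged separators only loosen the bounds and make every step easier.
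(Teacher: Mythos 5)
Your construction deviates from the paper's in one crucial respect: you extend \emph{every} $w_i$ to $w_i'$, whereas the paper alternates, taking $u_i=w_i$ for odd $i$ and $u_i=w_i'$ for even $i$ (mirroring the $c_1/c_0$ alternation of Lemma~\ref{lem:stepup}). That alternation is exactly what powers the case analysis you leave unfinished. In the paper, once the copy of $Z_{n-1}$ (your $A$, the paper's $D$) is located with its left copy in $u_i1_{2n+1}$ and its right copy in $1_{2n+1}u_{i+1}$ for $i$ odd, the cases close as follows: if $D$ contains $1_{2n+1}$, then since $u_i=w_i$ contains no $1_{2n+1}$ (Property 2), that run can only be the trailing piece of the separator, which pins $D$ to end there and makes the final copy of $Z_{n-2}$ inside $D$ too short; if $D$ contains $1_{2n}$ but not $1_{2n+1}$, then since every long run of $u_{i+1}=w_{i+1}'$ has length exactly $2n+1$, the right copy of $D$ is trapped in $1_{2n}u1_{2n}$ with $u$ $1_{2n}$-free, and the two copies of $Z_{n-2}$ inside $D$ cannot both contain $1_{2n}0$ (or $01_{2n}$). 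With your all-extended construction the second and third cases survive, but the first collapses: both $w_{j}'$ and $w_{j+1}'$ may contain \emph{internal} $1_{2n+1}$ runs (arising from $1_{2n}$ runs of $w_j$, $w_{j+1}$), and none of the five properties of $B(n)$ forbids a copy of $Z_{n-1}$ inside $w_j'$ that contains such an internal run --- Property 3 only excludes $Z_{n-1}$ from subwords avoiding $1_{2n}$, while Properties 4 and 5 only exclude $Z_n$. So no within-$X$ contradiction is available, and any rescue would have to compare the two copies of $X$ across the positions $j_1$ and $j_2$, a genuinely different and harder argument.

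This is precisely the step you yourself flag as ``the remaining --- and main --- obstacle'' and only ``plan to'' resolve; since that case analysis is the entire content of the lemma (everything preceding it is routine), the proof is incomplete, and the planned analysis is not obviously salvageable for the non-alternating construction (in particular, a single copy of $A$ cannot ``manufacture a copy of $Z_n$'' inside one $1_{2n}w_j1_{2n}$, because the two copies of $A$ within one $X$ lie in different $w_j'$-regions on opposite sides of the separator). The fix is simple: revert to the alternating construction, after which the cases close as in the paper. Your verification of Properties 1--3, the injectivity and counting, and the auxiliary observation are all fine; indeed your derivation of Property 3 of $B(n+1)$ directly from Property 5 of $B(n)$ is slightly cleaner than the paper's, which must handle a boundary case for odd $i$.
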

\begin{proof} 
Let $w_1,\ldots,w_b$ be a permutation of the words in $B(n)$. Let $u_i=w_i$ if $i$ is odd and otherwise $u_i$ is obtained from $w_i$ by adding a one to each copy of $1_{2n}$ in $w_i$. The proof will follow similar lines to the proof of Lemma~\ref{lem:stepup}, but with the word $1_{2n+2}$ serving as the analogue of a distinguished letter. That is, instead of introducing new letters, we use a special subword consisting only of ones.

To that end, let $w$ be the word $u_11_{2n+2}u_21_{2n+2}u_31_{2n+2}\ldots 1_{2n+2}u_b$ formed by placing a copy of $1_{2n+2}$ between each $u_i$ and $u_{i+1}$ and concatenating the sequence. To prove the lemma, it will suffice to show that $w$ satisfies the five properties required for a word to be in $B(n+1)$. As each $w_i \in B(n)$, each $u_i$ begins and ends with a zero, and so $w$ also begins and ends with a zero, verifying the first property. As every subword of $u_i$ consisting only of ones has length at most $2n+1$ and, since each $u_i$ begins and ends with a zero, there is a zero before and after each $1_{2n+2}$ occurrence, $w$ does not contain $1_{2n+3}$, verifying the second property.

The third property asks that any subword of $1_{2n+2}w1_{2n+2}$ not containing $1_{2n+2}$ is $Z_n$-free. Any such subword must be contained in $1_{2n+2}u_i1_{2n+2}$ for some $i$ but not containing the first or last letter. 
Recall also that $u_i$ starts and ends with $0$. By using the fourth and fifth properties of $B(n)$, we see that the word $1_{2n}u_i1_{2n}$ is $Z_n$-free when $i$ is odd and the word $1_{2n+1}u_i1_{2n+1}$ is $Z_n$-free when $i$ is even. Therefore, any copy $Z$ of $Z_n$ must start at the second letter of $1_{2n+2}u_i1_{2n+2}$ or end at the second to last letter of $1_{2n+2}u_i1_{2n+2}$ for some odd $i$. Write $Z=ABACABA$ with $A$ a copy of $Z_{n-2}$. As there are four copies of $A$ but at most two possible copies of $1_{2n+1}$ in $Z$, and $Z$ begins at the second letter or ends at the second to last letter of $1_{2n+2}u_i1_{2n+2}$, $A$ must be all ones and have length at most $2n$. However, $A$ is a copy of $Z_{n-2}$ and hence has length at least $2^{n-2}-1>2n$ (since $n \geq 6$), a contradiction. This verifies the third property. 

We next verify the fourth property, that $1_{2n+2}w1_{2n+2}$ is $Z_{n+1}$-free. Suppose, for contradiction, that $1_{2n+2}w1_{2n+2}$ contains a copy $Z'$ of $Z_{n+1}$ and this copy is of the form $XYX$, where $X$ is a copy of $Z_n$. Neither copy of $X$ contains one of the subwords $u_i$ used to make $w$, as otherwise the other copy of $X$ would have to contain an identical subword $u_{i'}$, but $u_i$ and $u_{i'}$ are distinct. Hence, the left copy of $X$ must be in $1_{2n+2}u_{1}$ or $u_{i}1_{2n+2}u_{i+1}$ for some $i$ and the right copy of $X$ must be in $u_{b}1_{2n+2}$ or $u_{i'}1_{2n+2}u_{i'+1}$ for some $i'$. Write $X=DED$ with $D$ a copy of $Z_{n-1}$. We will assume, without loss of generality, that the left copy of $X$ is in $u_{i}1_{2n+2}u_{i+1}$ with $i$ odd (the other case may be handled similarly).

We first show that $X$ contains the copy of $1_{2n+2}$. If $X$ is in $u_i1_{2n}$ or in $1_{2n+1}u_{i+1}$, then the fact that $X$ is a copy of $Z_n$ would contradict the fourth and fifth properties of $B(n)$, respectively. If $X$ is in $u_i1_{2n+1}$ and contains the last letter of $1_{2n+1}$, then either the right copy of $D$ is a subword of $1_{2n+1}$, contradicting the fact that $D$ is a copy of $Z_{n-1}$ (which must have length at least $2^{n-1}-1>2n$), or the right copy of $D$ contains $1_{2n+1}$, forcing the left copy of $D$ to also contain $1_{2n+1}$ but be a subword of $u_{i}$, which is $1_{2n+1}$-free. In any case, we see that $X$ contains the copy of $1_{2n+2}$.

If $E$ does not intersect the copy of $1_{2n+2}$, then $1_{2n+2}$ is entirely contained in one of the copies of $D$ and hence also in the other copy of $D$, which is entirely in $u_i$ or $u_{i+1}$, a contradiction. Hence, $E$ intersects the copy of $1_{2n+2}$. Thus, the left copy of $D$ is in $u_i1_{2n+1}$ and the right copy of $D$ is in $1_{2n+1}u_{i+1}$. We now split into cases.

Case 1: $D$ contains $1_{2n+1}$ as a subword.

In this case, as $u_i$ does not contain a copy of $1_{2n+1}$ and ends with $0$, the left copy of $D$ is in $u_i1_{2n+1}$ and ends at the last letter. Writing $D=ABA$ with $A$ a copy of $Z_{n-2}$, we see that the right copy of $A$ contains at most $2n$ letters, as otherwise the left copy of $A$, which is a subword of $u_i$, would contain $1_{2n+1}$. But $Z_{n-2}$ has length at least $2^{n-2}-1>2n$, contradicting the fact that $A$ is a copy of $Z_{n-2}$. 

Case $2$: $D$ contains $1_{2n}$ as a subword but does not contain $1_{2n+1}$. 

In this case, by the construction of $u_{i+1}$, the right copy of $D$ must be a subword of $1_{2n}u1_{2n}$ with $u$ a subword of $w_{i+1}$ which begins and ends with a $0$ and is $1_{2n}$-free. Writing $D=ABA$ with $A$ a copy of $Z_{n-2}$, we see, since $2^{n-2}-1 > 2n$, that $A$ contains $1_{2n}$ as a strict subword. But if, for example, $A$ contains $1_{2n}0$, this easily contradicts the fact that $D$ is a subword of $1_{2n}u1_{2n}$ with at most two copies of $1_{2n}$.

Case $3$: $D$ does not contain $1_{2n}$ as a subword. 

In this case, considering the left copy of $D$, by the third property of $B(n)$, $D$ is $Z_{n-1}$-free, contradicting that $D$ is a copy of $Z_{n-1}$. 
This completes the verification of the fourth property of $B(n+1)$. 

Finally, we need to verify the fifth property. This says that if $w'$ is obtained from $w$ by adding a one to each copy of $1_{2n+2}$ in $w$, then $1_{2n+3}w'1_{2n+3}$ is $Z_{n+1}$-free. The proof of this is almost identical to the proof of the fourth property, but we include it for completeness. 

Suppose, for contradiction, that there is a copy $Z'$ of $Z_{n+1}$ in $1_{2n+3}w'1_{2n+3}$ and this copy is of the form $XYX$, where $X$ is a copy of $Z_n$. 
Note that while creating $w'$ we did not change any of the words $u_i$, since they start and end with $0$ and contain no copy of $1_{2n+2}$ by the second property of  $B(n)$.
Neither copy of $X$ contains a $u_i$ used to make $w$, as otherwise the other copy of $X$ would have to contain an identical subword $u_{i'}$, but $u_i$ and $u_{i'}$ are distinct. Hence, the left copy of $X$ must be in $1_{2n+3}u_{1}$ or $u_{i}1_{2n+3}u_{i+1}$ for some $i$ and the right copy of $X$ must be in $u_{b}1_{2n+3}$ or $u_{i'}1_{2n+3}u_{i'+1}$ for some $i'$. Write $X=DED$ with $D$ a copy of $Z_{n-1}$. We will assume, without loss of generality, that the left copy of $X$ is in $u_{i}1_{2n+3}u_{i+1}$ with $i$ odd (the other case may be handled similarly).

We first show that $X$ contains the copy of $1_{2n+3}$. If $X$ is a subword of $u_i1_{2n}$, then the fact that $X$ is a copy of $Z_n$ contradicts the fourth  property of $B(n)$. If $X$ is a subword of $1_{2n+1}u_{i+1}$,  then the fact that $X$ is a copy of $Z_n$ contradicts the fifth property of $B(n)$. If $X$ is in $u_i1_{2n+2}$ and contains one of the last two letters, then either the right copy of $D$ is a subword of $1_{2n+2}$, contradicting the fact that $D$ is a copy of $Z_{n-1}$ (which must have length at least  $2^{n-1}-1>2n+2$), or the right copy of $D$ contains $1_{2n+1}$, forcing the left copy of $D$ to also contain $1_{2n+1}$ but be a subword of $u_{i}$, which is $1_{2n+1}$-free. If $X$ is in $1_{2n+2}u_{i+1}$ and contains the first letter of $1_{2n+2}$, then either the left copy of $D$ is a subword of $1_{2n+2}$, again a contradiction, or the left copy of $D$ contains $1_{2n+2}$, forcing the right copy of $D$ to also contain $1_{2n+2}$ but be a subword of $u_{i+1}$, which is $1_{2n+2}$-free. In any case, we see that $X$ contains the copy of $1_{2n+3}$.

If $E$ does not intersect the copy of $1_{2n+3}$, then $1_{2n+3}$ is entirely contained in one of the copies of $D$ and hence also in the the other copy of $D$, which is entirely in $u_i$ or $u_{i+1}$, a contradiction. Hence, $E$ intersects the copy of $1_{2n+3}$. Thus, the left copy of $D$ is in $u_i1_{2n+2}$ and the right copy of $D$ is in $1_{2n+2}u_{i+1}$. We again split into cases.

Case 1: $D$ contains $1_{2n+1}$ as a subword.

In this case, as $u_i$ does not contain a copy of $1_{2n+1}$, the left copy of $D$ is in $u_i1_{2n+2}$ and ends at one of the last two letters. Writing $D=ABA$ with $A$ a copy of $Z_{n-2}$, we see that the right copy of $A$ contains at most $2n$ letters, as otherwise the left copy of $A$, which is a subword of $u_i$, would contain $1_{2n+1}$. But $Z_{n-2}$ has length at least $2^{n-2}-1>2n$, contradicting the fact that $A$ is a copy of $Z_{n-2}$. 

Case $2$: $D$ contains $1_{2n}$ as a subword but does not contain $1_{2n+1}$. 

In this case, by the construction of $u_{i+1}$, the right copy of $D$ must be a subword of $1_{2n}u1_{2n}$ with $u$ a subword of $w_{i+1}$ which begins and ends with a $0$ and is $1_{2n}$-free. Writing $D=ABA$ with $A$ a copy of $Z_{n-2}$, we see, since $2^{n-2}-1 > 2n$, that $A$ contains $1_{2n}$ as a strict subword. But if, for example, $A$ contains $0 1_{2n}$, this easily contradicts the fact that $D$ is a subword of $1_{2n}u1_{2n}$ with at most two copies of $1_{2n}$.

Case $3$: $D$ does not contain $1_{2n}$ as a subword. 

In this case, considering the left copy of $D$, by the third property of $B(n)$, $D$ is $Z_{n-1}$-free, contradicting that $D$ is a copy of $Z_{n-1}$. 
We have therefore verified the fifth property of $B(n+1)$, completing the proof of the lemma. 
\end{proof}

We round off the section by proving Theorem~\ref{thm:main2}, which states that there are binary words avoiding $Z_n$ of length at least a tower of twos of height $n-4$, that is,
\[f(n,2) \geq 2^{2^{\iddots^{2}}}
 \raisebox{-10pt}[0pt][0pt]{\hspace*{-24pt}\dbrace{11pt}}
 \raisebox{0pt}[0pt][0pt]{\hspace*{-11pt}\scriptsize{$n$-$4$}}
.\]

{\bf Proof of Theorem~\ref{thm:main2}:} We will begin by proving inductively that 
\[|B(n)| \geq 2^{2^{\iddots^{2}}}
 \raisebox{-10pt}[0pt][0pt]{\hspace*{-24pt}\dbrace{11pt}}
 \raisebox{0pt}[0pt][0pt]{\hspace*{-11pt}\scriptsize{$n$-$3$}}
\]
for all $n \geq 6$. For the base case, note that $|B(6)| \geq 2^{4} = 2^{2^{2}}$ as all binary words of length $6$ beginning and ending with a $0$ have the five desired properties. For the induction step, we use Lemma~\ref{lem:stepup2} to conclude that
\[|B(n+1)| \geq |B(n)|! \geq 2^{|B(n)|}\]
for $|B(n)| \geq 4$, which easily gives the required result. To complete the proof of the theorem, we simply note that since all the words in $B(n)$ have the same length, their common length must be at least $\log_2 |B(n)|$.
{\hfill$\Box$\medskip}

\section{Determining $f(3,q)$ up to a constant factor} 
\label{sec:z3}

In this section, we prove Theorem~\ref{thm:f3q}, which determines the value of $f(3,q)$ up to an absolute constant. We begin by proving the upper bound. In the proof, we will say that an interval is {\it constant} if only one letter appears in that interval.

\begin{theorem}
For $q > 3$, $f(3,q) <  3e^{1/2} 2^q q!$
\end{theorem}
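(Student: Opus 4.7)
The plan is a counting argument that sharpens Lemma~\ref{lem:base}'s pigeonhole by (i) exploiting periodicity to bound the number of times each $2$-minimal word can appear in a $Z_3$-free word, and (ii) using the constant-interval bound to handle the only "periodic" $2$-minimal word, namely $aaa$.

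First I would bound $|M|$, the number of $2$-minimal words, sharply. From the characterization of $2$-minimal words in the proof of Lemma~\ref{lem:base}, writing $s = q-1-r$ in the sum over structures $a c_1^{j_1}\cdots c_r^{j_r} a$,
\[
|M| \leq q + q\sum_{r=1}^{q-1}\frac{(q-1)!}{(q-1-r)!}\,2^r = q + 2^{q-1} q! \sum_{s=0}^{q-2}\frac{1}{s!\,2^s} \leq q + e^{1/2}\cdot 2^{q-1} q!.
\]

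Second, I would bound the multiplicity $n_W$ of each $W \in M$ in $w$, assuming $w$ is $Z_3$-free. If $W$ occurs at positions $p_1 < p_2$ in $w$, the two disjoint copies of $W$ (each containing $Z_2$) would give a copy of $Z_3$ unless $p_2-p_1 \leq |W|$. For $W = aua \ne aaa$, the middle $u$ avoids the letter $a$, so any period $s$ of $W$ with $s \leq |W|-2$ would force $W[1+s] = W[1] = a$, yet $W[1+s]$ lies within $u$; the smallest period of $W$ is therefore $|W|-1$, consecutive occurrences are at distance at least $|W|-1$, and three occurrences $p_1 < p_2 < p_3$ would yield $p_3-p_1 \geq 2(|W|-1) > |W|$, a contradiction since $|W| \geq 3$. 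Hence $n_W \leq 2$. For $W = aaa$, all occurrences lie inside a single maximal constant interval of $a$'s, whose length is at most $6$ since $a^7$ contains $Z_3$; this gives $n_{aaa} \leq 4$.

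To conclude, I would combine these multiplicity bounds with a positional analysis that relates $\sum_W n_W$ to $N$. At each position $p \leq N-2q$, the suffix $w[p..N]$ contains $Z_2$, so the shortest $Z_2$-containing substring $S_p$ starting at $p$ exists; either $S_p$ is itself $2$-minimal (contributing to $\sum_W n_W$), or $S_p = c \cdot S_{p+1}$ with $|S_{p+1}| = |S_p|-1$ (since $S_p[2..|S_p|]$ then contains $Z_2$ by minimality of $S_p$). Combining a telescoping argument over the sequence $(|S_p|)$ with the constant-interval bound --- which caps the contribution from positions $p$ lying inside runs of a single letter --- yields an inequality of the shape $N - O(q) \leq C \cdot \sum_W n_W \leq C(2|M|+2q)$ in which the constant-interval case saves the factor of $q$ that naive pigeonhole over $f(2,q)$-windows would lose. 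Plugging in the sharpened $|M|$ bound then gives $N < 3 e^{1/2}\cdot 2^q q!$ for $q > 3$. The main technical hurdle is the last step: integrating the constant-interval structure tightly enough into the positional accounting to reduce the loss from $O(q)$ to $O(1)$.
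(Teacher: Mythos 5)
Your first two steps are sound, and they take a genuinely different (and in places cleaner) route than the paper: your count of $|M|$ is correct, and your periodicity argument that a non-constant $2$-minimal word $W=aua$ has smallest period $|W|-1$, hence at most two occurrences in a $Z_3$-free word, is a nice rigorous substitute for the paper's terser ``three identical minimal copies of $Z_2$ yield two disjoint separated ones.'' (The paper instead first collapses each $xx$ to $x$, counts minimal $Z_2$'s in the collapsed word, and multiplies by the number $2^{s-2}$ of preimages; your direct count of uncollapsed $2$-minimal words absorbs that factor into $|M|$.)

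The gap is in the final, positional step, and it is not merely technical: the inequality $N-O(q)\le C\cdot\sum_W n_W$ with a \emph{uniform} constant $C$ is false, and the mechanism you invoke to justify it --- the constant-interval bound capping positions ``inside runs of a single letter'' --- is not what controls the loss. In your scan, the positions $p$ charged to a given occurrence of a $2$-minimal word $W$ ending at position $e$ are exactly those $p$ for which $w[p..e-1]$ avoids $Z_2$; a $Z_2$-free stretch is a sequence of (nearly) distinct letters, each repeated at most twice adjacently --- the opposite of a constant run. Concretely, if $W=ac_1c_2a$ is preceded by $d_1d_1d_2d_2\cdots d_{q-3}d_{q-3}$, then roughly $2q$ positions $p$ all have $S_p$ ending at the same occurrence of $W$, so that single occurrence absorbs $\Theta(q)$ positions and no constant $C<3$ can work termwise. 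What saves the argument is that the number of positions charged to an occurrence of $W$ is at most about $2(q-1-r_W)$, where $r_W$ is the number of distinct internal letters of $W$ (only the $q-1-r_W$ unused letters can pad the $Z_2$-free stretch in front of $W$), and words with small $r_W$ are correspondingly rare: one must bound $\sum_W n_W\cdot 2(q-r_W)$, a weighted sum whose evaluation (it comes to $4\cdot\tfrac{3}{2}e^{1/2}2^{q-1}q!=3e^{1/2}2^qq!$, with the strict inequality recovered from truncating the exponential tail) reproduces exactly the paper's window factor $q+2-s$. Until you prove this length-dependent run bound and carry out that weighted sum, the claimed constant-factor conclusion does not follow; with it, your route does reach the same bound by an essentially parallel computation.
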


\begin{proof}
Let $w=a_1a_2\ldots a_n$ be a word of length $n$ over a $q$-letter alphabet which does not contain $Z_3$. Observe that there are no two intervals of length three in $w$ which are disjoint, non-consecutive and constant with respect to a given letter, as otherwise $w$ contains $Z_3$. For each letter in our alphabet, if there is a constant interval of length three in that letter, we delete this interval and one of the letters immediately before or after it so that no constant intervals of length three in that letter remain. This process deletes at most $q$ intervals of length at most four, leaving $q+1$ disjoint intervals of $w$ of total length at least $n-4q$ with the property that each such interval $J$ has no constant word of length three. 

If such an interval has two consecutive letters that are identical, replace it by a single instance of the same letter to obtain a new word $w'$ on a reduced interval $J'$. The word $w'$ has no two consecutive identical letters and $|J'| \geq |J|/2$. By the pigeonhole principle, each interval of length $q+1$ in $J'$ contains a copy of $Z_2$, and hence a minimal copy of $Z_2$. Each minimal copy of $Z_2$ in $J'$ consists of an interval $[i,j]$ with $i+2 \leq j \leq i+q$, where, for $i \leq k < l \leq j$, we have $a_k=a_l$ if and only if $k=i$ and $l=j$. The length of such a minimal copy of $Z_2$ is $j-i+1$ and it contains $j-i$ distinct letters. The number of intervals of length $q+1$ in $J'$ is $\max(|J'|-q,0)$. Let $m$ be the total number of such intervals of length $q+1$ taken over all of the at most $q+1$ intervals $J'$. Then 
$$m \geq \frac{n-4q}{2}-q\cdot (q+1) = \frac{n}{2} - q^2-3q.$$ 

Note that each minimal copy of $Z_2$ in $J'$ comes from a minimal copy of $Z_2$ in $J$, with each internal letter $x$ either originally coming from $x$ or $xx$. 
Thus, each minimal copy of $Z_2$ of length $s$ in some $J'$ comes from one of $2^{s-2}$ possible minimal copies of $Z_2$ in $w$. Note also that in the intervals of $w$, we cannot have three copies of the same minimal $Z_2$, as otherwise the first and last copy of $Z_2$ would be disjoint and separated by at least one letter, giving rise to a copy of $Z_3$. By the pigeonhole principle, if we get the same minimal copy of $Z_2$ in the reduced intervals more than $2^{s-1}$ times, then we get three identical minimal copies of $Z_2$ in the original word $w$, giving a copy of $Z_3$, a contradiction. Let $r_s$ be the number of minimal copies of $Z_2$ of length $s$ we get in total across the reduced intervals. As there are $q!/(q-s+1)!$ possible minimal copies of $Z_2$ of length $s$ with no two consecutive letters equal, we have $r_s \leq 2^{s-1} q!/(q-s+1)!$. Also, each copy of $Z_2$ of length $s$ is in at most $q+2-s$ intervals of length $q+1$. Hence, 
\begin{eqnarray*} 
m & \leq & \sum_{s=3}^{q+1} (q+2-s)r_s \leq  \sum_{s=3}^{q+1} (q+2-s)2^{s-1} q!/(q-s+1)!=\sum_{t=0}^{q-2} (t+1)2^{-t}t!^{-1}2^{q}q! \\ 
& = & 2^{q}q!\sum_{t=0}^{\infty} (t+1)2^{-t}t!^{-1}-\sum_{t=q-1}^{\infty} (t+1)2^{-t}t!^{-1}2^{q}q! = \frac{3}{2}e^{1/2} 2^q q! - \sum_{t=q-1}^{\infty} (t+1)2^{-t}t!^{-1}2^{q}q! \\ 
& <  & \frac{3}{2}e^{1/2} 2^q q! - ((q-1)+1)2^{-(q-1)}(q-1)!^{-1}2^{q}q! =\frac{3}{2}e^{1/2} 2^q q! - 2q^2,
 \end{eqnarray*}
where the first equality follows by letting $t=q+1-s$. Comparing the upper and lower bounds for $m$, we get $n \leq 3e^{1/2} 2^q q! - 2q^2 + 6q $. Hence, $f(3,q) < 3e^{1/2} 2^q q!$ for $q>3$.
\end{proof}

With some additional work, one can improve the upper bound in this theorem by an asymptotic factor of $2/3$. In the proof described above, we obtained the interval $J'$ from $J$ by collapsing any instances of $xx$ to $x$. In the worst case, where every letter appears twice, this may cause our interval to shrink by a factor of $1/2$. However, by being more careful, one can get a bound which reflects the fact that one typically needs to collapse adjacent letters only half the time. We suspect that the bound which results from applying this idea may be optimal.

\begin{question}
Prove or disprove that
$$f(3,q)=(2e^{1/2}+o(1))2^q q!$$ 
\end{question}

We next present a lower bound construction, drawing on ideas used in the construction of de Bruijn sequences (see, for example,~\cite{BP07} or~\cite{DG12}), which gives $f(3,q) > 2q!+q-1$. This bound is off from the actual value by a factor $\Theta(2^q)$, but, as we will see below, may be modified to recover this missing factor. 

Say that a word over a $q$-letter alphabet has property $P$ if any two instances of the same letter have distance at least $q-1$ and all intervals of length $q$ are distinct. It is easy to check that any word with property $P$ avoids the Zimin word $Z_3$. Indeed, if there is a copy $xyxzxyx$ of $Z_3$ of minimal length, then the $x$ consists of a single letter. Then $y$ has to consist of at least $q-2$ letters as any two instances of $x$ are at distance at least $q-1$. As we get $xyx$ twice, this implies that there are two identical intervals of length $q$, contradicting property $P$. 

We next prove that the length of the longest word with property $P$ is $2q!+q-1$ and hence $f(3,q) > 2 q!+q-1$. Indeed, it suffices to construct a word with property $P$ of length $2q!+q-1$ as any such word contains each of the $2q!$ possible intervals of length $q$ exactly once and by the pigeonhole principle it follows that this is the longest possible length of such a word.

Construct a directed graph $D$ on the $q!$ words of length $q-1$ over a $q$-letter alphabet that have distinct letters, where an edge is directed from vertex $u$ to vertex  $v$ if the last $q-2$ letters of $u$ are the first $q-2$ letters of $v$. Each vertex of this directed graph has indegree $2$ and outdegree $2$. Thus, $D$ has $2q!$ edges. We claim that this directed graph is strongly connected, that is, it is possible to follow a directed path from any vertex to any other vertex.

\begin{claim}\label{claimstronglyconnected} The directed graph $D$ is strongly connected. 
\end{claim}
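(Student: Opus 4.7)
The plan is to recognise $D$ as a Cayley digraph of the symmetric group $S_q$ and reduce strong connectivity to the fact that two natural cyclic rotations generate $S_q$. The first step is to biject the vertex set of $D$ with $S_q$ by sending each word $u = u_1 u_2 \cdots u_{q-1}$ to the full permutation $\pi(u) = u_1 u_2 \cdots u_{q-1} x$ of the alphabet, where $x$ is the unique letter missing from $u$; both sides have size $q!$, so this is a bijection. This lets us picture every vertex as a complete ordering of the alphabet, with the ``missing'' letter pinned to position $q$.

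Second, I would identify the two out-edges of $u$ with right multiplication (on positions) by two explicit elements of $S_q$. The only letters that can legally follow $u_2 \cdots u_{q-1}$ are $u_1$ and $x$, so the two out-neighbours correspond, under the bijection above, to the full permutations $u_2 \cdots u_{q-1} u_1 x$ and $u_2 \cdots u_{q-1} x u_1$. Comparing with $\pi(u)$, the first is obtained by cyclically rotating positions $1, 2, \ldots, q-1$ (keeping position $q$ fixed), while the second is the full cyclic rotation on all $q$ positions. Writing $\tau = (1\ 2\ \cdots\ q{-}1)$ and $\sigma = (1\ 2\ \cdots\ q)$, this identifies $D$ with the right Cayley digraph of $S_q$ on generating set $\{\sigma, \tau\}$.

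It then remains to show that $\sigma$ and $\tau$ generate $S_q$ as a group: since $\sigma^q = \tau^{q-1} = \mathrm{id}$, both $\sigma^{-1} = \sigma^{q-1}$ and $\tau^{-1} = \tau^{q-2}$ are themselves directed-path-realisable in $D$, so group-generation is exactly what strong connectivity asks. For the group-theoretic step I would compute that $\sigma \tau^{-1}$ is the transposition $(1\ q)$ and then conjugate this transposition by powers of $\sigma$ to produce every adjacent transposition $(i\ i{+}1)$, which generate $S_q$ by the standard bubble-sort argument. I expect the only delicate part of the proof to be the bookkeeping in the second step — correctly tracking which letter is missing after each move and confirming that the Cayley identification is genuine — after which the remaining group theory is entirely routine.
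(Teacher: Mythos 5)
Your proposal is correct, and it takes a genuinely different route from the paper's. The paper argues by hand: after reducing to walks from the fixed vertex $u = 12\cdots(q-1)$ ``by symmetry in the letters'', it exhibits an explicit walk of length $q$ (read off from the concatenation $u\,q\,v$) realising each adjacent transposition of $u$, and a separate explicit walk of length $q-1$ handling targets $v$ whose letter set differs from that of $u$; it then invokes the fact that adjacent transpositions generate the symmetric group. Your argument instead globalises the structure: appending the missing letter identifies $D$ with the right Cayley digraph of $S_q$ on $\{\sigma,\tau\}$ with $\sigma=(1\,2\,\cdots\,q)$ and $\tau=(1\,2\,\cdots\,q-1)$ (the key point, which you correctly flag as the only delicate step, being that the two out-edges correspond to right multiplication by \emph{fixed} position permutations independent of the vertex), after which strong connectivity is equivalent to $\langle\sigma,\tau\rangle=S_q$ because in a finite group inverses of generators are positive powers; and $\sigma\tau^{-1}=(1\,q)$ conjugated by powers of $\sigma$ yields all adjacent transpositions. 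Your version buys uniformity: the paper's two cases (same letter set versus different letter set) collapse into one statement, and the ``symmetry'' reduction becomes the automatic vertex-transitivity of Cayley digraphs. The paper's version buys elementariness and portability: it produces explicit short walks with no composition-order bookkeeping, and, more importantly, it transfers directly to the modified digraph $G$ used in the subsequent theorem (where internal letters may be doubled), which is no longer literally a Cayley graph of a group, so your identification would need reworking there.
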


\begin{proof}
It suffices to show that there is a walk in $D$ from any vertex $u$ to any other vertex $v$. By symmetry in the letters, we can assume $u$ is the word $12\cdots (q-1)$. For vertices $v$ which correspond to a permutation of $\{1,2,\ldots,q-1\}$, it will suffice to be able to get to any adjacent transposition of $u$, since adjacent transpositions generate the group of permutations. Thus, we simply need to get from $u=12\cdots (q-1)$ to $v=12\cdots (i+1)i \cdots (q-1)$, which is the same as $u$ except $i$ and $i+1$ have switched places. We can do this by considering the 
word formed by concatenating $u$, then the single letter $q$, and then $v$. By considering successive intervals of length $q-1$ from this word of length $2q-1$, we find a walk of length $q$ from $u$ to $v$ in $D$. We also need to show how to get from the word $u=12\ldots (q-1)$ to another word $v$ which doesn't have the same set of $q-1$ letters. Suppose, therefore, that $v$ has letter $q$ and does not have letter $i$. Since we can get from any vertex to any permutation of its letters, we can assume $v$ is the same as $u$ but with $i$ replaced by $q$. But, by concatenating $u$ and $v$, we have a walk from $u$ to $v$ in $D$ of length $q-1$, completing the proof. 
\end{proof}

As the directed graph has equal indegree and outdegree at each vertex and is strongly connected, it is Eulerian, that is, there exists an Eulerian tour covering all the edges. If we form a word by starting with the word of the first vertex and adding one letter at a time for each edge as we walk along the Eulerian tour, this gives a word of the desired length with property $P$. 

We now improve this argument to give a bound which is within a constant factor of the upper bound.

\begin{theorem}
For $q \geq 5$, $f(3,q) > \frac{3}{4}2^q q!+2q-4$.  
\end{theorem}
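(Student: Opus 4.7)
The plan is to refine the de Bruijn--Eulerian construction (which gave $f(3,q) > 2q!+q-1$) by exploiting doubled interior letters, recovering the missing $\Theta(2^q)$ factor. The key observation is that each reduced minimal $Z_2$ of length $q$ admits $2^{q-2}$ distinct \emph{full} shapes via independent doublings of its $q-2$ interior letters, and each such full shape may appear up to twice in a $Z_3$-free word provided its two copies are placed adjacent or overlapping (since a $Z_3 = XYX$ requires two identical copies of a $Z_2$ separated by a nonempty $Y$).

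First I would form a multigraph $D^{\star}$ on the same vertex set as $D$ by keeping each fresh edge once and replacing each $Z_2$-creating edge $u \to v$ (where $v$ is the cyclic rotation of $u$) by a bundle of parallel copies, one per pair $(\epsilon, j) \in \{0,1\}^{q-2} \times \{1,2\}$. Here $\epsilon$ specifies a pattern of interior doublings for the associated $Z_2$, and $j$ is a duplication marker allowing each decorated shape to appear twice. Since in- and out-degrees at each vertex stay balanced (both equal to $1 + 2^{q-1}$) and strong connectivity is inherited from $D$ (established already in Claim~\ref{claimstronglyconnected}), $D^{\star}$ admits an Eulerian circuit.

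Next I would traverse this Eulerian circuit and construct $w$ one emission at a time. Fresh edges emit a single new letter as before. Each $Z_2$-creating parallel edge labeled $(\epsilon, j)$ emits the rotated letter completing the associated minimal $Z_2$, but is preceded by doubled copies of the interior letters dictated by $\epsilon$, producing a decorated full $Z_2$ of length $q + |\epsilon|$. The traversal order would be chosen so that the two parallel edges sharing a common $(\epsilon, \cdot)$ label are traversed close in time, placing the two corresponding occurrences of that decorated $Z_2$ adjacent (or overlapping at the shared endpoint letter) in $w$.

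I would then verify $Z_3$-avoidance. Using the fact that $w$ contains $Z_3$ iff some minimal $Z_2$ in $w$ appears twice with a nonempty gap between the copies, the construction guarantees that each decorated full $Z_2$ appears exactly twice with no such gap. A local case analysis around each doubling insertion would confirm that the inserted interior doublings do not create any other short minimal $Z_2$ appearing three or more times --- the potentially new structures are $bb$ substrings at the doubling sites, but these are too short to constitute minimal $Z_2$s of length at least three, and longer $Z_2$s bridging doubled letters are controlled by the same adjacency guarantee.

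Finally I would count the length of $w$. The Eulerian circuit contains $q!$ fresh edges (one letter each) and $2^{q-1} q!$ $Z_2$-creating parallel edges, each emitting the completing letter plus on average $(q-2)/2$ doublings, for an average of $q/2$ letters per traversal. Together with the $q-1$ initial letters of the starting vertex, the total length of $w$ is at least $q \cdot 2^{q-2} q! + q! + q - 1$, which exceeds $\tfrac{3}{4} 2^q q! + 2q - 4$ for $q \geq 5$ since $\tfrac{q}{4} \geq \tfrac{3}{4}$.

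The main obstacle will be in the third step: arranging the Eulerian tour so that each pair of parallel edges with the same doubling label truly produces adjacent occurrences of the decorated $Z_2$ in $w$, and verifying that no unintended repeated minimal $Z_2$ structure is created as a side effect of the doublings. This requires a careful local analysis that combines the balance and structure of the multigraph $D^{\star}$ with an inspection of which short subwords can arise at the junctions between decorated $Z_2$s.
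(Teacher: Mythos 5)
Your high-level idea (recover the factor $2^{q}$ by doubling interior letters of the minimal $Z_2$'s, and run an Eulerian argument) is the same as the paper's, but the way you encode the doublings does not work, and the most decisive evidence is your own length count. You claim a $Z_3$-free word of length about $q!+2^{q-1}q!\cdot\bigl(1+\tfrac{q-2}{2}\bigr)=q\,2^{q-2}q!+q!=\tfrac{q}{4}\,2^{q}q!+q!$. For $q\geq 20$ this exceeds the upper bound $f(3,q)<3e^{1/2}2^qq!\approx 4.95\cdot 2^qq!$ proved earlier in the same section, so the construction cannot exist as described. The accounting error is that you attribute all $(q-2)/2$ expected interior doublings of a decorated $Z_2$ to the single edge that completes it, even though those interior letters are emitted by the preceding $q-2$ edges and are shared among the overlapping windows; with $(1+2^{q-1})q!$ edges each emitting $O(1)$ new letters the word can only have length $O(2^qq!)$. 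This points to the underlying structural flaw: in your multigraph the doubling pattern $\epsilon$ is an edge label on the rotation edge $u\to v$, but it prescribes doublings of letters that were already written down $q-2$ steps earlier, so the emission process is not well defined. The paper avoids this by putting the doubling information into the \emph{vertices}: its graph $G$ has $2^{q-3}q!$ vertices, each a window of $q-1$ distinct letters with each internal letter optionally doubled, so that every letter "knows" at emission time whether it is doubled; each vertex has in- and out-degree $4$, each edge adds $1$ or $2$ letters (average $1.5$ over the $2^{q-1}q!$ edges), giving exactly $3\cdot 2^{q-2}q!+2q-4$.

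The second gap is the duplication marker $j\in\{1,2\}$ together with the adjacency requirement. Between two traversals of parallel edges $u\to v$, an Eulerian circuit must traverse a closed walk from $v$ back to $u$; this walk has at least $q-2$ edges and itself uses other rotation edges, each of whose partner copies then imposes the same adjacency constraint, and with $2^{q-1}$ parallel copies at every rotation edge these return walks necessarily insert nonempty gaps between almost all partner pairs. You correctly flag this as "the main obstacle," but it is not a technical detail to be checked later: it is the heart of the matter, and no argument is given that such a tour exists. Worse, once each decorated $Z_2$ appears twice by design, the clean avoidance argument of the Eulerian method (two identical copies of $Z_2$ would force the same edge to be used twice) is no longer available, and you would need a new proof that no copy of $Z_2$ --- including copies $PQP$ with $|P|\geq 2$ that are not among your designated decorated shapes --- occurs twice with a nonempty separation; none is supplied. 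The paper sidesteps all of this by using each edge exactly once, accepting the constant $3/4$ rather than trying to reach the factor $2$ that the "two copies per shape" idea would ideally give.
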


\begin{proof}
Consider the directed graph $G$ on $2^{q-3} q!$ vertices, where each vertex is formed from a word of length $q-1$ with distinct letters by replacing each internal letter $x$ with $x$ or $xx$. Notice that the vertices are words of length somewhere between $q-1$ and $2q-4$. We place an edge from vertex $u$ to vertex $v$ if the last $q-2$ distinct letters of $u$ is the same as  the first $q-2$ distinct letters from $v$ (this is without repetition of letters) and the subword of $u$ starting at the third distinct letter of $u$ and ending at the second to last letter of $u$ is the same as the subword of $v$ consisting of its second letter to its third to last distinct letter (this is with repetition of letters). For example, if $q=5$ with alphabet $\{a,b,c,d,e\}$, then the outneighbors of vertex $abbccd$ are $bccde$, $bccdde$, $bccda$, and $bccdda$. Each vertex of the directed graph $G$ has indegree $4$ and outdegree $4$, so the number of edges of $G$ is  
 $4 \cdot 2^{q-3} q!=2^{q-1}q!$. 
 
 A slight modification of Claim \ref{claimstronglyconnected} shows that this directed graph is strongly connected. Indeed, the only substantial difference is that we also need to be able to get from one vertex $v$ to the vertex $v'$ which is the same word as $v$ except a single internal letter $x$ that appears by itself in $v$ is replaced by $xx$ or vice versa. But, by concatenating $v$ and $v'$, we get a walk in our directed graph from $v$ to $v'$ of length $q-1$.  As the directed graph is strongly connected and each vertex has equal indegree and outdegree, it is Eulerian, and there is an Eulerian tour starting with a longest vertex (which corresponds to using each of the $q-3$ internal letters twice) covering all of the edges. This Eulerian tour gives rise to a word of the desired length that avoids $Z_3$. Indeed, it avoids $Z_3$ as otherwise we would have two identical copies of $Z_2$, each giving rise to the same edges of $G$ in the Eulerian tour, contradicting the fact that each edge is used exactly once. Furthermore, each vertex has two outgoing edges which add one letter to the end of the word and two outgoing edges which add two letters to the end of the word. This gives an average of $1.5$ letters per edge, after the initial vertex of $2q-4$ letters, giving a total length of $1.5 \cdot 2^{q-1}q!+2q-4=3 \cdot 2^{q-2} q!+2q-4$.
 \end{proof}

\section{Concluding remarks}

{\bf Explicit constructions for $f(3,q)$.}
Our first proof that $f(3,q) \geq q^{q - o(q)}$ is non-constructive, relying upon an application of the Lov\'asz local lemma. However, our second proof, discussed in Section \ref{sec:z3} and giving a bound which is tight to within a constant, can be made algorithmic, constructing the required $Z_3$-free word in time polynomial in its length. Indeed, this proof boils down to constructing an Eulerian tour in an Eulerian directed graph and it is well known that this can be done efficiently. 

Another, stronger notion of explicitness asks that each letter of the word can be computed in time polynomial in $q$. We describe below another construction of a word of length $q^{q-o(q)}$ which is $Z_3$-free and explicit in this sense. This construction is similar to the random construction used in the proof of Theorem \ref{thm:3LLL}, except that the permutation of $L_j$ used on the interval $I_{it+j}$ is now defined explicitly instead of randomly. 

Split the alphabet arbitrarily into $t = \log q$ parts $L_1, L_2, \dots, L_t$, each of size $S = \frac{q}{\log q}$. Let $p_1,\ldots,p_t$ denote the first $t$ primes and $r_j=S!-p_j$ for $1 \leq j \leq t$. Writing $R = S!$, we have $R>r_1>r_2>\ldots>r_t=R-o(R)$ and each pair $r_i,r_j$ with $i > j$ is relatively prime. We construct a word of length $N=tS\prod_{i=2}^{t} r_j = q^{q-o(q)}$, consisting of $N/S$ intervals $I_k$ of length $S$.  For $1 \leq j \leq t$, delete the lexicographic last $p_j$ permutations of $L_j$, keeping the remaining $r_j=S!-p_j$ permutations of $L_j$. With period $r_j$, we use these $r_j$ permutations in lexicographic order to fill the intervals $I_{it+j}$. For this word to contain a copy of $Z_3$, it must contain two identical subwords, each consisting of $t-1$ consecutive intervals $I_k$ of length $S$. But then the difference in their indices must be $t$ times a multiple of $r_j$ for $t-1$ values of $j \in [t]$. Since the $r_j$ are relatively prime, the difference of the indices must therefore be a multiple of $t\prod_{j \in [t] \setminus \{i\}} r_j \geq t\prod_{j=2}^{t} r_j$. However, as the number of intervals is at most $t\prod_{j=2}^{t} r_j$, there cannot be two such intervals, and we are done. 

\vspace{3mm}
{\bf Random words.}
For $n$ fixed and $q$ tending to infinity, it is possible to show that the threshold length for the appearance of $Z_n$ in a random word over an alphabet of size $q$ is $q^{2^{n-1}-(n+1)/2}$. 
For example, over the English alphabet, with $q = 26$, we will likely find a copy of $Z_3$ in a random word of length $1000$ but the minimum word length needed to guarantee a copy is about $10^{34}$. The proof, which we sketch below, is similar to the birthday paradox. This is easiest to see when $n = 2$, as we are simply looking for a word with repeated letters (with the slight caveat that we don't want these letters to be adjacent). 

To prove the upper bound, we estimate the number of copies of $Z_{n-1}$ where each word is a single letter. The length of each such copy of $Z_{n-1}$ is $2^{n-1}-1$ and, as there are $n-1$ variables $x_i$, we see that the probability a random word of length $2^{n-1}-1$ is a copy of $Z_{n-1}$ is $q^{(n-1)-(2^{n-1}-1)}$. Therefore, if we take a random word of length $N$, we expect roughly $N q^{(n-1)-(2^{n-1}-1)}$ such copies of $Z_{n-1}$. Furthermore, the number of copies will be concentrated around this value and almost all of them will be disjoint and separated by at least one letter. We have $D=q^{n-1}$ possible copies of $Z_{n-1}$ of this type 
(for comparison to the birthday paradox, think of $D$ as the number of days in a year) 
and once we get about $D^{1/2}=q^{(n-1)/2}$ of these short copies of $Z_{n-1}$, we will likely get two that are the same, giving a copy of $Z_n$. So we want $N$ with $Nq^{(n-1)-(2^{n-1}-1)} = D^{1/2} = q^{(n-1)/2}$ and, hence, $N = q^{2^{n-1}-(n+1)/2}$. 

The lower bound is a union bound over all possible $Z_n$ (most of them are very unlikely, as the $Z_{n-1}$ are so long that getting repeats of the same long word is incredibly unlikely). 
In fact, there is even a hitting time result (think of building a word one letter at a time here, adding letters at the end) saying that $Z_n$ almost surely appears at the same time when you first find two identical copies of $Z_{n-1}$, each of length $2^{n-1} - 1$. 

\vspace{3mm}
{\bf $q$-unavoidability.}
Recall that a pattern $P$ is $q$-unavoidable if every sufficiently long word over a $q$-letter alphabet contains a copy of $P$. Though the results of Zimin and Bean, Ehrenfeucht and McNulty completely determine those patterns which are $q$-unavoidable for all $q$, much less is known about the patterns which are $q$-unavoidable for some $q$. In particular, given $q$, one may ask whether there is a pattern which is $q$-unavoidable but $(q+1)$-avoidable. Words with this property are known for $q =2, 3$ and $4$, but it is an open problem to construct such words for $q \geq 5$. To give some indication of the difficulty, we note that the pattern constructed by Clark~\cite{C06} which is $4$-unavoidable but $5$-avoidable is $P = abvacwbaxbcycdazdcd$, which admits no obvious generalisation. In light of such difficulties, we believe that any further progress on understanding those patterns which are $q$-unavoidable for some but not all $q$ would be interesting.

\vspace{3mm}
\noindent
{\bf Note added in proof.} After this paper was completed, we learned that a variant of our Theorem~\ref{thm:main2} was obtained simultaneously and independently by Carayol and G\"oller~\cite{CG17}.  It is also worth noting that the results of Section~\ref{sec:local} give an affirmative answer to a question raised in their paper, namely, whether the probabilistic method can be used to give a tower-type lower bound for the function $f(n,q)$.

\end{document}